\newcommand{\NN}{{\mathbb N}}
\newcommand{\RR}{{\mathbb R}}
\newcommand{\cA}{{\mathcal{A}}}
\newcommand{\cB}{{\mathcal{B}}}
\newcommand{\cM}{{\mathcal{M}}}
\newcommand{\cF}{{\mathcal{F}}}
\newcommand{\cG}{{\mathcal{G}}}
\newcommand{\cP}{{\mathcal{P}}}
\newcommand{\cS}{{\mathcal{S}}}
\newcommand{\cU}{{\mathcal{U}}}
\newcommand{\cZ}{{\mathcal{Z}}}
\newcommand{\cX}{{\mathcal{X}}}
\newcommand{\cT}{{\mathcal{T}}}
\newcommand{\argmax}{\mathop{\mathrm{argmax}}}  
\newcommand{\sign}{\mathop{\mathrm{sign}}}
\let\@fnsymbol\@arabic
\newtheorem{theorem}{Theorem}
\newtheorem{remark}[theorem]{Remark}
\newtheorem{corollary}[theorem]{Corollary}
\newtheorem{example}[theorem]{Example}
\newtheorem{problem}[theorem]{Problem}
\newtheorem{lemma}[theorem]{Lemma}
\newtheorem{definition}[theorem]{Definition}
\newtheorem{algorithm}[theorem]{Algorithm}
\title{Control-Oriented Learning on the Fly}
\date{}
\author{Melkior~Ornik, Arie~Israel, and~Ufuk~Topcu
\thanks{M.~Ornik is with the Institute for Computational Engineering and Sciences, University of Texas at Austin, Austin, TX, 78712 USA \quad e-mail: mornik@ices.utexas.edu.}%
\thanks{A.~Israel is with the Department of Mathematics, University of Texas at Austin, Austin, TX, 78712 USA \quad e-mail: arie@math.utexas.edu.}%
\thanks{U.~Topcu is with the Department of Aerospace Engineering and Engineering Mechanics, University of Texas at Austin, Austin, TX, 78712 USA \quad e-mail: utopcu@utexas.edu.}%
}
\begin{document}
\maketitle

\begin{abstract}
This paper focuses on developing a strategy for control of systems whose dynamics are almost entirely unknown. This situation arises naturally in a scenario where a system undergoes a critical failure. In that case, it is imperative to retain the ability to satisfy basic control objectives in order to avert an imminent catastrophe. A prime example of such an objective is the reach-avoid problem, where a system needs to move to a certain state in a constrained state space. To deal with limitations on our knowledge of system dynamics, we develop a theory of myopic control. The primary goal of myopic control is to, at any given time, optimize the current direction of the system trajectory, given solely the information obtained about the system until that time. We propose an algorithm that uses small perturbations in the control effort to learn local dynamics while simultaneously ensuring that the system moves in a direction that appears to be nearly optimal, and provide hard bounds for its suboptimality. We additionally verify the usefulness of the algorithm on a simulation of a damaged aircraft seeking to avoid a crash, as well as on an example of a Van der Pol oscillator.
\end{abstract}

\section*{Notice of Previous Publication}
This manuscript is a significantly extended version of M.~Ornik, A.~Israel, U.~Topcu, ``Myopic Control of Systems with Unknown Dynamics'' \cite{Ornetal17C}. In addition to providing the detailed proofs omitted from that paper for lack of space, this manuscript contains novel sections discussing the existence of a myopically-optimal control law, design of goodness functions, robustness to disturbances, promising directions of future work, and a simulation presenting a myopic approach to control of a Van der Pol oscillator. In particular, the content of Section \ref{measgoo}, Section \ref{nois}, Section \ref{damosc}, Section \ref{secfut}, Appendix \ref{exim}, and Appendix \ref{proofs} is entirely novel, and multiple other sections have been substantially expanded.

\section{Introduction}

\IEEEPARstart{I}n an event of a rapid and unexpected change in the dynamics of a system, the ability to retain certain degree of useful control over a system is crucial. A notable real-world example of such a catastrophic event is that of an aircraft losing its wing after collision with another airplane \cite{Alo06}. In the particular event, the pilot managed to retain enough control of the aircraft to be able to safely land at a nearby airbase. The pilot's strategy in this specific event depended on his intuition and prior experience at flying the aircraft. This paper seeks to develop a methodical approach to control of an unknown system in real time. The strategy we propose is based on an intuitive approach one might use when trying to drive an unknown vehicle: performing small ``wiggles'' in controls to see how the system behaves, before deciding on a longer-term control action.

In the described setting of control of an unknown system, the only data available to extract information on the system dynamics can be obtained during the system run. Data-driven learning of the dynamics of unknown systems has been a subject of significant recent research \cite{Bruetal16,Cubetal12,Hiletal15,SchLip09,SolOst08}. However, the developed methods are data-intensive or, alternatively, require some a priori knowledge of system dynamics; hence, they are not suitable for the above motivating example. A recent work \cite{Ahmetal17} based on a similar scenario uses differential inclusions to assess the safety of a trajectory governed by unknown dynamics. However, this work merely discusses uncontrolled dynamics, and does not prescribe a control law which would ensure safety of a trajectory. A number of works do deal with control design for unknown dynamics, with or without trying to explicitly learn a model \cite{CopRat94,Khaetal16,NaHer14,Yehetal90}. However, the results of those works again significantly differ from ours, as they generally do not include formal guarantees on short-time, non-asymptotic system performance. Additionally, the control objectives discussed in those papers are markedly different than the ones that naturally arise in the scenario we are exploring, as they are all based on reference tracking. In particular, \cite{CopRat94}, \cite{Khaetal16}, and \cite{Yehetal90} employ methods based on transfer functions to solve a tracking problem in an unknown system, while \cite{NaHer14} uses neural networks in optimal tracking control.

In contrast to the previously mentioned papers, we are primarily concerned with solving reach-avoid-type problems. This consideration is motivated by practical reasons: in the event of a system failure, satisfying advanced control specifications likely becomes impossible, and the system should concentrate on its survival. In physical systems, it is reasonable to pose the problem of system survival as a question of reaching a certain target set as soon as possible while avoiding obstacles and staying within a predefined safety envelope. In the motivating example of a damaged aircraft, which we will use as a running example throughout this paper, the target set is the airbase runway, and the aircraft should reach it as soon as possible without touching the ground beforehand.

A standard approach to reach-avoid problems in systems with known dynamics \cite{Zhoetal12} is based on constrained optimal control, where the constraints are given by the geometry of the safety envelope and obstacles, and the time to reach a target set is minimized. The solutions to this problem clearly depend on the full dynamics of the system. In our setting we have very little information on the system dynamics. In fact, we only have the information that can be obtained by observing a system trajectory since the beginning of the system run, i.e., since the abrupt change. Hence, it is unlikely that we can find the exact optimal control law to solve a reach-avoid problem.

We instead propose the notion of {\em myopic control}, and use it to suboptimally solve reach-avoid problems. In this notion, the system uses a control law that seems to work best {\em at the given time}, without any knowledge on whether that control law will lead to good results in the future. This framework is motivated by the inherent lack of firm knowledge of future system dynamics.

To counterbalance the lack of confidence about the future, a new myopically-optimal control law is calculated and applied every so often, after a short time interval. In order to determine the next myopically-optimal control law, the system continually modifies the previous law by adding a series of {\em wiggles}: small, short-time controls that do not significantly change the system trajectory, but act as a learning mechanism to determine the local control dynamics of the system. In order for this mechanism to function, we make a technical assumption that the underlying system is affine in control. We briefly discuss a potential modified strategy for the general case.

The proposed algorithm results in an approximate solution to the myopic optimal control problem, with a degree of suboptimality dependent on the length of the control law update interval and the size of learning controls, i.e., wiggles. Additionally, if there are known bounds on regularity of system dynamics, the parameters of the algorithm can be set to ensure any desired bound on the degree of suboptimality. Finally, since the algorithm does not use any a priori information on the system (except, perhaps, the aforementioned bounds on regularity), but learns its dynamics on the fly, it is entirely robust to disturbances, as it simply treats them as part of the original system dynamics.

The organization of this paper is as follows: Section \ref{probdef} formally introduces the class of problems we are interested in solving. Section \ref{mycont} then defines the myopic optimal control problem. This problem is founded on a notion of apparent {\em goodness} of a control law that attempts to satisfy a certain objective. Thus, Section \ref{measgoo} discusses design of an appropriate goodness function. Section \ref{algov} provides an overview of the proposed algorithm to solve the myopic optimal control problem. Section \ref{resb} then provides performance bounds for the proposed algorithm. In Section \ref{nois} we discuss the robustness of the algorithm in the presence of disturbances. Section \ref{simul} provides simulation results for the running example of a damaged aircraft, showing that the algorithm indeed performs as expected, as well as a highly nonlinear example of a Van der Pol oscillator. Substantial open questions identified in previous sections are discussed in Section \ref{secfut}, while Section \ref{conc} provides concluding remarks. The paper additionally contains two appendices. Appendix \ref{exim} contains a brief discussion of some sufficient conditions for the existence of a solution to the myopic optimal control problem. Finally, as the mathematical results used in Section \ref{resb} are rather technical, their full proofs are contained in Appendix \ref{proofs}.

\emph{Notation.} Set of all continuous functions from set $\cA\subseteq\RR^n$ to set $\cB\subseteq\RR^m$ is denoted by $C(\cA,\cB)$. For a set $\cA$, $2^\cA$ denotes the set of all subsets of $\cA$. The right one-sided derivative of function $f:I\to\RR^n$ at time $t$ is denoted by $df(t+)/dt$. For a vector $v\in\RR^n$, $\|v\|$ denotes its $2$-norm, and $\|v\|_\infty$ its max-norm. If $x,y\in\RR^n$, $x\cdot y$ denotes the standard dot product of $x$ and $y$. Unless stated otherwise, for $x\in\RR^n$, $x_i$, $i\in\{1,2,\ldots,n\}$, denotes the $i$-th component of $x$.

\section{Problem Definition}
\label{probdef}

We investigate the control system
\begin{equation}
\label{thesys}
\dot{x}=f(x)+\sum_{i=1}^m g_i(x)u_i
\end{equation}
which evolves on $\cX\subseteq\RR^n$, where $f,g_i\in C(\cX,\cX)$ are unknown functions, and $u=(u_1,\ldots,u_m)$ is the $m$-dimensional control input with values in the bounded set $\cU\subseteq\RR^m$. While it is trivial to modify our proposed algorithm for the case where $\cU$ is any manifold with corners in the sense of \cite{Joy12}, for the simplicity of our argument we take $\cU=\{u\in\RR^m~|~\|u\|_\infty\leq 1\}$. In the remainder of the text, a solution to \eqref{thesys} with control input $u$ and initial value $x_0$ is denoted by $\phi_u(\cdot,x_0)$.

We are assuming that, at any time instance, we are able to observe the full state $x$ and the entire control $u$. The case of partial observations is also of significant interest for physical systems. Even though it is not the primary focus of this paper, we briefly discuss it in Section \ref{secfut}. Additionally, in a practical sense of control of a real-world system, we are usually only interested in the behavior of a system on a compact state space $\cX$. It thus makes sense to assume that there exist $M_0\geq 0$ and $M_1\geq 0$ such that
\begin{itemize}
	\item $\|f(x)\|\leq M_0$, $\|g_i(x)\|\leq M_0$ for all $x\in\cX$, $i\in\{1,\ldots,m\}$,
	\item $\|f(x)-f(y)\|\leq M_1\|x-y\|$, $\|g_i(x)-g_i(y)\|\leq M_1\|x-y\|$ for all $x,y\in\cX$, $i\in\{1,\ldots,m\}$.
\end{itemize}

The class of control-affine systems covers a wide array of physical control systems, including standard linear aircraft models \cite{Bos92, Duketal88, Steetal16}, and a variety of nonlinear models of mechanical systems \cite{LaV06}, e.g., a unicycle with rider \cite{Navetal99}, quadrotor helicopter \cite{Moketal06}, and the F-8 Crusader aircraft \cite{GarJor77}. Control and optimal control of general control-affine systems have been substantially covered by previous literature (see, e.g., \cite{LaV06,BulLew04,Isi95,Son98}), including investigations of reach-avoid problems \cite{FadBro06}. However, motivated by the scenario of an unexpected failure in a physical system, our setting introduces two additional requirements on control design:

\begin{enumerate}[R1]
	\item{Functions $f,g_1,\ldots,g_m$ in \eqref{thesys} are unknown at the beginning of the system run. We are certainly allowed to use trajectory data during the system run to determine information on them, but when the run starts, we are aware merely that the system is of the form \eqref{thesys}, and potentially know a bound on the functions' norms $M_0$ and Lipschitz constants $M_1$.}
	\item{There is only one system run, starting from a single predetermined initial state $x_0$. All control design needs to be performed during that run, without any repetitions.}
\end{enumerate}

We note that the bulk of previous work on systems with disturbances or noise, as discussed in, e.g., \cite{DorBis11, KwaSiv72}, does not help to deal with requirement R1. The standard setup in the theory of systems with disturbances requires the controller to know the ``general'' system dynamics, which are then modified by some unknown, but bounded, disturbance or noise. Requirement R1 is significantly more limiting for control design. It stipulates that the system dynamics are entirely unknown at the beginning of the system run. Potential additional disturbances to system dynamics in our setup are discussed in Section \ref{nois}.

As previously mentioned, the goal of this paper is to develop a method for solving reach-avoid-type control problems \cite{Esfetal16,Lev11} in the control-affine setting \eqref{thesys} under the above requirements R1 and R2. We use the following formulation for the reach-avoid problem:

\begin{problem}[Reach-avoid Problem]
	\label{rap}
	Let $x_0\in\cX$, $T\geq 0$, $\cT\subseteq\cX$, and $\cB\subseteq\cX$. Let $\phi_u(\cdot,x_0):[0,T]\to\cX$ be the trajectory generated by system \eqref{thesys} under control $u$ and with $\phi_u(0,x_0)=x_0$. Find, if it exists, the control law $u^*:[0,T]\to\cU$ such that the following holds:
	\begin{enumerate}[(i)]
		\item $\phi_{u^*}(t,x_0)\notin\cB$ for all $t\in[0,T]$,
		\item there exists $0\leq T'_{u^*}\leq T$ such that $\phi_{u^*}(t,x_0)\in\cT$ for all $t\in[T'_{u^*},T]$, and
		\item $T'_u$ from (ii) is the minimal such value for all the control laws $u:[0,T]\to\cU$ which satisfy (i) and (ii). 
	\end{enumerate}
\end{problem}

Let us build intuition through a simplification of Problem \ref{rap}. We note that if there is no particular target set $\cT$, the reach-avoid problem naturally transforms into the problem of remaining outside the undesirable set for as long as possible. This problem is again one natural abstraction of our running example: instead of having to deal with the technicalities of aircraft landing, we may simply want to pose the question of requiring the damaged aircraft to stay above ground for as long as possible. Let us provide a formal description of this version of the problem. 

\begin{definition}
	Let $x_0\in\cX$, and let $\cB\subseteq\cX$ be closed. Using the notation of Problem \ref{rap}, the {\em first bad time} of the trajectory $\phi_u$ is given by $$b(u)=\min_{t\geq 0}\{\phi_u(t,x_0)\in\cB\}\textrm{.}$$ If $\phi_u(t,x_0)\notin\cB$ for all $t\geq 0$, we take $b(u)=\infty$.
\end{definition}

\begin{problem}[Avoid Problem]
	\label{pro1}
	Let $x_0\in\cX$, and let $\cB\subseteq\cX$ be closed. Find a control law $u^*:[0,+\infty)\to\cU$ such that $$b(u^*)=\max_{u}b(u)\textrm{.}$$
\end{problem}

This paper would ideally end by providing a control design to solve Problem \ref{rap} and Problem \ref{pro1} which respects requirements R1 (unknown system) and R2 (no repeated runs). However, this outcome is likely impossible. The requirements we have imposed imply that, at the beginning of the system run, we have no particular knowledge of the values of functions $f(x)$ and $g_i(x)$ at any point $x$, and no way of acquiring significant knowledge about the dynamics in a certain area of $\cX$ until the system trajectory reaches that area. In fact, even when the system trajectory reaches a point $x$, we are still unable to exactly determine the values of $f(x)$ and $g_i(x)$, because a single controlled trajectory provides only limited information on full control dynamics around a point. Thus, it is not reasonable to expect that Problem \ref{rap} and Problem \ref{pro1} can be exactly solved under the requirements of our setting. We now present an intuitive variation of these problems that can be approached using strategies which respect requirements R1 and R2.

\section{Myopic Control}
\label{mycont}

Suppose that we want to solve the reach-avoid or avoid problem under requirements R1 and R2. Requirement R2 implies that, in order to determine the optimal control law $u^*$ to be used starting at some time $t$, one may only use the information obtained from the system trajectory until time $t$. Thus, based on our knowledge of the system dynamics in the interval $[0,t]$, one would have to assess the ``quality'' of a candidate future control law on the entire interval $(t,+\infty)$. However, we can only do so with very limited confidence, since the dynamics far away from points already visited at some time $0\leq t'\leq t$ are entirely unknown. Thus, at time $t$ we have no knowledge with certainty on whether a particular control will result in good behavior at time $t+\alpha$, for some large $\alpha$. 

The above discussion behooves us to replace Problem \ref{rap} and Problem \ref{pro1} by a {\em myopic optimal control problem}, where we want to design a control law such that, at every time instance, the trajectory {\em appears} to behave as well as possible. For instance, if our ultimate desire is to solve Problem \ref{pro1}, it makes sense to require that the trajectory at any given instance of time is moving away from the undesirable set $\cB$ as fast as possible. Clearly, this reasoning may not be ultimately optimal, as moving away from $\cB$ as fast as possible at some point could bring the trajectory into a position where the dynamics are such that it is simply forced to enter $\cB$ (see Figure \ref{figexp} for an example). Thus, this notion of ``appearing to behave as well as possible'' is simply our best guess. Since we do not have (almost) any knowledge of the dynamics far away from our current position, the best we can do is try to find a seemingly optimal direction of movement at any given time, given the totality of prior information obtained during the system run until that point.

\begin{figure}[ht]
	\centering
	\begin{tikzpicture}[yscale=0.5]
	\tikzstyle{point}=[thick,fill=red, draw=red,circle,inner sep=0pt,minimum width=4pt,minimum height=4pt]
    
    \fill[lightgray] (-4.5,0.5) -- (3,0.5) -- (3,-0.5) -- (-4.5,-0.5);
	\draw[thick, blue] (2,0) arc(0:120:2);
    \draw[thick, blue] (-4,3.464) arc(180:300:2);
	\node[label={[label distance=0mm]270:$\cB$}] at (0,0.7) {};
 	\node[label={[label distance=-1mm]305:$x$}] at (-1,1.732) {};
	\draw[red,-{Latex[length=2mm]}] (-1,1.732) -- (0,2.232);
	\draw[green,-{Latex[length=2mm]}] (-1,1.732) -- (-2,1.232);
	\end{tikzpicture}
	\caption{An example illustrating the built-in imperfection of myopic control. Assume that the system dynamics are such that, unbeknownst to the control designer, the system is restricted to moving along the blue curve. The control objective is to avoid the undesirable set $\cB$, denoted in gray. When the system is at point $x$, the available direction vectors are denoted by the green and red arrows. From the designer's myopic perspective, it may appear better to control the system in the red direction, because this direction forces the system to move further away from $\cB$. That action will ultimately result in the system moving into the undesirable set. Given the designer's lack of knowledge about the system, such a consequence is unforseeable.}
	\label{figexp}
\end{figure}
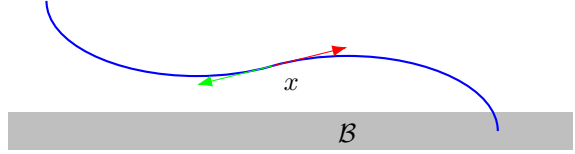

We propose to formalize the above notion of ``appearing to behave as well as possible'' using a {\em goodness function}
\begin{equation}
\label{defg}
(\phi,v)\mapsto G(\phi,v)\textrm{,}\,\quad \phi\in\cF,v\in\RR^n\textrm{,}
\end{equation} 
where 
\begin{equation*}
\cF=\bigcup_{T\geq 0}C([0,T],\cX)\textrm{.}
\end{equation*}
Function $G$ is intended to quantify how well the trajectory appears to be doing at satisfying its control specifications at a time when its trajectory until time $T$ and velocity at time $T$ are given by $\phi$ and $v$, respectively. 

Myopic policies and goodness functions have long been used in decision theory, in a wide variety of settings, including resource allocation \cite{Pow11}, computer vision \cite{RimBro94}, and Bayesian learning \cite{ZhaYu13}. However, all these settings differ significantly from the setting of this paper; to the authors' knowledge, myopic decision-making has never been utilized in dealing with control of unknown systems.

\begin{example}
\label{ex}
Going back to our running example of a damaged aircraft, assume that we want to solve Problem \ref{pro1}, i.e., have the aircraft stay away from the ground for as long as possible. In that case, if $x_1$ and $x_2$ denote the aircraft's horizontal and vertical position, respectively, $\cB\subseteq\RR^2$ is given by $\cB=\{(x_1,x_2)~|~x_2\leq 0\}$. Then, one natural option is for $G$ to measure the slope on which the trajectory is moving towards the boundary of $\cB$, i.e., $x_2=0$. Without any additional information on the system, the more negative this slope is, the worse the aircraft naturally appears to be doing. Hence, $$G(\phi,v)=\frac{v_2}{v_1}\textrm{,}$$ where we disregard the case of $v_1\leq 0$, is a reasonable goodness function. Figure \ref{fig1} provides an illustration of this goodness function.

\begin{figure}[ht]
\centering
\begin{tikzpicture}[yscale=0.5]
    \tikzstyle{point}=[thick,fill=black, draw=black,circle,inner sep=0pt,minimum width=2pt,minimum height=2pt]
		\node[point, label=left:{$\phi(T)$}] at (0,2) {};
		\node[label=right:{$G(\phi,v)=0$}] at (2,2) {};
		\node[label=right:{$G(\phi,v)=0.5$}] at (2,4) {};
		\node[point, label=below:{$G(\phi,v)=-2$}] at (0.5,0) {};
		\node[point, label=below:{$G(\phi,v)=-0.25$}] at (4,0) {};
		\draw[dashed] (0,2) -- (2,2);
		\draw[dashed] (0,2) -- (2,4);
		\draw[dashed] (0,2) -- (4,0);
		\draw[dashed] (0,2) -- (0.5,0);
		\draw (-2,0)--(5,0);
\end{tikzpicture}
\caption{An illustration of the goodness function $G$ from Example \ref{ex}. Dashed lines represent the possible tangents to the trajectory of system \eqref{thesys} at $\phi(T)$, and thus, our local predictions of future movement of the system state. These tangents are evaluated by $G$ depending on their slope towards the undesirable set $\cB$, whose edge is represented by a solid line.}
\label{fig1}
\end{figure}
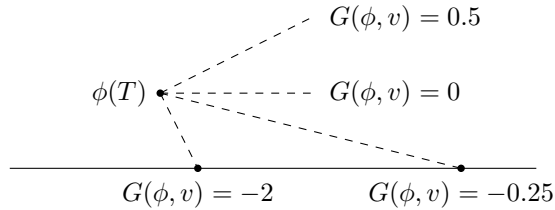
\end{example}

In general, determining an appropriate goodness function depends on our information on the system and the control specifications. We briefly discuss this problem, and provide further examples, in Section \ref{measgoo}.

Having discussed how a goodness function provides a measure of apparent quality of a trajectory at a given point (with respect to, for instance, specifications of Problem \ref{rap} and Problem \ref{pro1}), we now introduce the myopic optimal control problem. As a slight deviation from standard notation, in order to emphasize that $\phi\in\cF$ has $[0,T]$ as its domain, we will occasionally write such functions as $\phi|_{[0,T]}$.

\begin{problem}[Myopic Optimal Control Problem]
\label{pro2}
Let $x_0\in\cX$. Find a control law $u^*:[0,+\infty)\to\cU$ such that, for all $t\geq 0$, if $x=\phi_{u^*}(t,x_0)$, then 
\begin{equation}
\label{maxg}
G \left(\phi_{u^*}(\cdot,x_0)|_{[0,t]},\frac{d\phi_{u^*}(0+,x)}{dt}\right) = \max_{u\in\cU} G\left(\phi_{u^*}(\cdot,x_0)|_{[0,t]},\frac{d\phi_u(0+,x)}{dt}\right)\textrm{.}
\end{equation}
\end{problem}
We note that in \eqref{maxg} we are slightly abusing notation: if $u\in \cU$, then $u$ is not formally a control input as a function of time. Thus, by $d\phi_u(0+,x)/dt$ we denote the value of the right-hand side of \eqref{thesys} for the particular $x\in X$ and $u\in \cU$. For the sake of narrative, we will usually omit a formal distinction between a control law $u$ and a control input $u\in \cU$ when such a distinction is obvious from the context. Additionally, while the notion of a solution to \eqref{thesys}, and hence to \eqref{maxg} as well, is questionable in the case of general measurable functions $u^*$, in the remainder of the paper we will be working with piecewise-continuous control laws, and in that sense we will not have any issues.

In subsequent sections of this paper, we will propose an approximate piecewise-constant solution to Problem \ref{pro2}. This solution can be made arbitrarily close to optimal, in the sense that that for any $T>0$ and $\mu>0$, Algorithm \ref{alg} can generate a piecewise-constant control law that satisfies 
\begin{equation}
\label{magsu}
\bigg| G\left(\phi_{u^*}(\cdot,x_0)|_{[0,t]},\frac{d\phi_{u^*}(0+,x)}{dt}\right) - \max_{u\in\cU} G\left(\phi_{u^*}(\cdot,x_0)|_{[0,t]},\frac{d\phi_u(0+,x)}{dt}\right)\bigg|<\mu
\end{equation} for all $t\geq T$. We note that this result does not directly guarantee existence of an {\em exact} solution to Problem \ref{pro2}. Some sufficient conditions for the existence of such a solution can be obtained. Nonetheless, because of our inability to exactly learn the system dynamics at even a single point from a single trajectory, it is not realistic to expect that any method of finding an exact solution to Problem \ref{pro2} would respect requirements R1 and R2. Discussing such solutions is thus not in the focus of this paper. However, we refer the reader to a brief discussion of sufficient conditions for the existence of a solution to Problem \ref{pro2} contained in Appendix \ref{exim}.

Before proposing a strategy to find an approximate piecewise-constant solution to Problem \ref{pro2} (while respecting requirements R1 and R2), let us briefly return to the issue of determining an appropriate goodness function, given the control objective and system information.

\section{Measuring Goodness}
\label{measgoo}

Determining an appropriate goodness function $G$ is generally not easy. We provided a very simple possible goodness function for one scenario in Example \ref{ex}. Let us provide three more examples of natural goodness functions, two for Problem \ref{rap} and one for Problem \ref{pro1}.

\begin{example}
	\label{expro1}
As the sole objective of Problem \ref{pro1} is not to enter the bad set $\cB$, it makes sense to desire the trajectory to be moving away from $\cB$ as fast as possible. Without any additional knowledge, out best approximation of the system state at time $\Delta t$ after it is at point $x$ is $x+v\Delta t$, where $v$ is the velocity vector at the given time. Hence, if $d_\cB:\cX\to[0,+\infty)$ denotes the usual distance from a point to set $\cB$, one reasonable option for $G$ is to take $$G\left(\phi|_{[0,t]},v\right)=\liminf_{\Delta t\to 0+}\frac{d_\cB\left(\phi(t)+v\Delta t\right)-d_\cB\left(\phi(t)\right)}{\Delta t}\textrm{.}$$
This approach to a goodness function is similar to the one developed in \cite{AlvMar11} in the context of viability theory, where the distance from the undesirable set is considered as an indicator of a trajectory robustness. Since in our case we are interested in measuring the ``robustness'' (i.e., goodness) of our prediction of the future trajectory, we replace the distance from the undesirable set by the rate of change of the distance. 
\end{example}

\begin{remark}
	\label{exrem}
	We note that function $G$ in Example \ref{expro1} is not a generalization of the goodness function from Example \ref{ex}: adapted to set $\cB$ from Example \ref{ex}, the goodness function from Example \ref{expro1} would be given by $v_2$, instead of $v_2/v_1$ as in Example \ref{expro1}. Both of these goodness functions certainly possess a reasonable rationale: while the function from Example \ref{ex} measures how far we expect the trajectory to get before it hits $\cB$, function from Example \ref{expro1} measures how fast we expect the trajectory to hit $\cB$. This difference between two different intuitively reasonable goodness functions for the same problem serves to confirm our previous point that it is difficult to develop the notion of an objectively correct goodness function. As mentioned, while we propose several goodness functions for the reach-avoid and avoid problems in this section, a formal investigation of a notion of a correct goodness function is not a focus of this paper.
\end{remark}

The goodness function proposed in Example \ref{expro1} will form the basis of the goodness function for the example of a damaged aircraft in Section \ref{damair}. However, because of the particularities of aircraft design, the control specifications in that example are more complicated than just stating that the aircraft must stay at a positive altitude. Additionally, due to the limitations of aircraft design, certain states cannot be directly influenced by control actions. Hence, the goodness function in Section \ref{damair} is more difficult than the one presented in Example \ref{expro1}. While we refer the reader to Section \ref{damair} for details, we emphasize that the goodness function in Example \ref{expro1} provides a basis for the developments of that section. We now proceed to propose a goodness function for the full reach-avoid problem.

\begin{example}
	\label{exrap}
	Framework of Problem \ref{rap} is significantly more complicated than Problem \ref{pro1}, as there are two objectives that need to be satisfied at the same time. One naive reasonable goodness function uses the following motivation: let us partition the safe set $\cX\backslash\cB$ into a ``boundary zone'' $\cZ^B$ and a ``interior zone'' $\cZ^I$. The motivation for naming of the two sets is that, if a system state is inside $\cZ^B$, it is considered at risk of leaving the safe set, and the primary objective becomes to enter the safer interior zone as soon as possible. If a system state is inside the interior zone, it is not considered at risk, and the primary objective is to reach the target set $\cT$. Then, a possible goodness function is given by $$G\left(\phi|_{[0,t]},v\right)=
	-\liminf\limits_{\Delta t\to 0+}\frac{d_{\cZ^I}\left(\phi(t)+v\Delta t\right)-d_{\cZ^I}\left(\phi(t)\right)}{\Delta t}$$ if $\phi(t)\in\cZ^B$, and $$G\left(\phi|_{[0,t]},v\right)=-\liminf\limits_{\Delta t\to 0+}\frac{d_\cT\left(\phi(t)+v\Delta t\right)-d_\cT\left(\phi(t)\right)}{\Delta t}$$ if $\phi(t)\in\cZ^I$.
Again, the above goodness function is not the only reasonable one, and has its drawbacks. For instance, it is not continuous in $\phi(t)$, which will become important when determining bounds for suboptimality of a solution to Problem \ref{pro2} in Section \ref{resb}. A more subtle goodness function may use a measure of how much a system state is at risk (for instance, distance from $\cB$), and then use a mixed approach between trying to enter set $\cT$ and not leave $\cX\backslash\cB$:
\begin{equation*}
G\left(\phi|_{[0,t]},v\right)=  (1-\lambda)\liminf_{\Delta t\to 0+}\frac{d_{\cB}\left(\phi(t)+v\Delta t\right)-d_{\cB}\left(\phi(t)\right)}{\Delta t} - \lambda\liminf_{t\to 0+}\frac{d_\cT\left(\phi(t)+v\Delta t\right)-d_\cT\left(\phi(t)\right)}{\Delta t}\textrm{,}
\end{equation*}
	where $$\lambda=\frac{d_\cB\left(\phi(t)\right)}{\max_y d_\cB(y)}\textrm{.}$$	
\end{example}

As the above examples show, there are multiple reasonable goodness functions for a particular control objective, and the choice of which function to use rests with the designer. Let us now briefly mention a particularly attractive facet of this freedom to choose a goodness function. The choice of goodness functions is a natural vessel for the incorporation of any kind of side information on the system. For instance, if we were to know that the control dynamics evolve on a particular subset of $\cX$ instead of entire $\cX$, it might make sense to choose the goodness function in a way that is conscious of such information. For example, Figure \ref{unnatman} illustrates a situation where the system is known to evolve on a particular subset of $\RR^2$, and while the naive goodness function would choose a direction that points towards the desired target set, but ultimately leads nowhere, an informed goodness function would choose a direction which seems counterintuitive, but ultimately leads towards the target set.

\begin{figure}[ht]
	\centering
	\begin{tikzpicture}[yscale=0.5]
	\tikzstyle{point}=[thick,fill=red, draw=red,circle,inner sep=0pt,minimum width=4pt,minimum height=4pt]
	\draw[thick, blue] (0,0) circle(2);
	\draw[thick, blue] (2,0) arc(-90:90:2);
	\node[point, label=left:{$\cT$}] at (2,4) {};
	\node[label=right:{$\cM$}] at (1.414,-1.414) {};
	\node[label={[label distance=-1mm]180:$x$}] at (2,0) {};
	\draw[red,-{Latex[length=2mm]}] (2,0) -- (2,1);
	\draw[green,-{Latex[length=2mm]}] (2,0) -- (3,0);
	\end{tikzpicture}
	\caption{An example of the role of side information when designing a goodness function. The control objective is to reach the set $\cT$, and system trajectories are, unlike in Figure \ref{figexp}, a priori known to evolve on the set $\cM\subseteq\RR^2$ (drawn in blue). Assume that the system trajectory is currently (at time $t$) at position $x$, and available velocity vectors at that point are given by the red and green arrows. While an uninformed goodness function might prefer the red arrow, which points directly towards $\cT$, the available side information urges the designer to apply such a goodness function that the trajectory continue in the direction indicated by the green arrow.}
	\label{unnatman}
\end{figure}
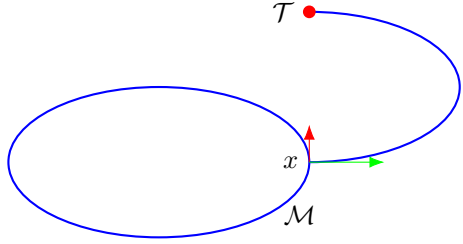

\begin{remark}
In all of the above examples, the proposed goodness functions are {\em memoryless}: $G(\phi_{[0,T]},v)$ is actually merely a function of $(\phi(T),v)$. However, the proposed theory allows for more complicated measures of goodness, incorporating knowledge obtained earlier in the system run. For instance, the fact that a goodness function is permitted to be memory-conscious allows to directly deal with temporal logic specifications. We consider linear temporal logic (LTL) specifications given over a collection $\cP=\{\cA_k~|~k\in K\}$ of subsets of $\cX$. An example of such a specification is ``visit $\cA_3$ after $\cA_2$, and do not enter $\cA_4$ before visiting $\cA_1$''. We omit a formal description of LTL, and direct the reader to \cite{ManPnu92}. Informally, an LTL formula determines all {\em allowed discrete behaviors} of a trajectory, where the discrete behavior of a trajectory describes the order in which a system trajectory visits the subsets in $\cP$. For any desired discrete behavior, the problem of finding a control law such that the system trajectory has that behavior can be posed as a series of reach-avoid problems \cite{Papetal16}. In other words, the system is first required to reach some target set $\cT_1$ while avoiding an undesirable set $\cB_1$, then required to reach set $\cT_2$ while avoiding $\cB_2$, etc. Guided by our previous examples, a reasonable goodness function to solve such a problem is given by $G(\phi|_{[0,t]},v)=G_i(\phi|_{[0,t]},v)$, where $i\in\NN$ is the smallest index such that $\{\phi|_{[0,t]}(\tau)~|~\tau\in[0,t]\}\cap\cT_i=\emptyset$, and $G_i$ is a goodness function for Problem \ref{rap} with target set $\cT_i$ and undesirable set $\cB_i$ (for instance, $G_i$ can be set to one of the two goodness functions proposed in Example \ref{exrap}). 
\end{remark}

The development of general goodness functions is not in the focus of this paper. However, the examples above show that goodness functions can be successfully designed for a variety of reach-avoid-type problems. We now turn towards our proposed algorithm for the approximate solution of Problem \ref{pro2}.

\section{Learn-Control Algorithm}
\label{algov}

We intend to approximately solve Problem \ref{pro2} by the use of short-time piecewise-constant controls. The proposed strategy relies on repeatedly learning the local dynamics at points on the observed system trajectory, and using those dynamics to simultaneously apply a myopically-optimal control law.

In order to learn the local dynamics around a single point on the trajectory, the controller can apply any $m+1$ affinely independent constant controls for a short period of time. Since system \eqref{thesys} is control-affine, we can use the observed changes in states to approximate the function $u\mapsto v_x(u)$ defined by $$v_x(u)=f(x)+\sum_{i=1}^m g_i(x)u_i\textrm{,}$$ where $x$ is a point on the trajectory at the end of this learning process.

After obtaining an approximation for $v_x$, if $\phi$ denotes the trajectory until the end of the learning process, by determining $\argmax_u G(\phi,v_x(u))\textrm{,}$ a constant control $u^*$ that appears to best satisfy the control specifications at point $x$ can be found. This control is then used during the next iteration of the learning process. In other words, the next learning process uses an affinely independent set of controls $u^*+\Delta u^0,\ldots,u^*+\Delta u^m$, where $\Delta u^i$ are small enough to ensure that their application still results in near-optimal goodness.

The described method bears some resemblance to the approach in \cite{AnsMur16}, which also uses short-time controls to achieve real-time (nearly) optimal behavior. However, the dynamics in \cite{AnsMur16} are presumed to be known. Thus, there is no learning element to the control strategy. Additionally, the control objective is to solve a standard optimal control problem, instead of the myopic version naturally imposed by the requirements of our scenario.

The above description of the control strategy is formally given as Algorithm \ref{alg}.

\begin{table}[ht]
	\renewcommand{\arraystretch}{1.4}
	\centering
	\begin{tabular}{p{0.95\columnwidth}}
		\hline
		\vspace{-10pt}
		\begin{algorithm} 
			\label{alg} 
			\
		\vspace{-7pt}\end{algorithm} \\
		\hline
		\vspace{-5pt}
		$1$ \quad $u^*:=0$ \\
		$2$ \quad $t_0:=0$ \\
		$3$ \quad \textbf{repeat} \\
		$4$ \quad \qquad Let $x^0$ be the current state of the system. \\
		$5$ \quad \qquad Take $\Delta u^0=0$ and choose $\Delta u^1=\pm \delta e_1,\ldots,\Delta u^m=\pm \delta e_m$ such that $u^*+\Delta u^1,\ldots,u^*+\Delta u^m\in\cU$. \\
		$6$ \quad	\qquad \textbf{for} $j=0,\ldots,m$ \\
		$7$ \quad		\qquad \qquad Apply control $u^*+\Delta u^j$ in the interval of time $[t_0+j\varepsilon,t_0+(j+1)\varepsilon)$. \\
        $8$ \quad		\qquad \qquad Let $x^{j+1}$ be the system state at the end of that period. \\
        $9$ \quad	\qquad \textbf{end for} \\
		$10$ \quad	\qquad $x:=x^{m+1}$ \\	
		$11$ \quad	\qquad Define function $\tilde{v}:\cU\to\RR^n$ as follows: \\
		\qquad	\qquad \qquad If $\lambda_0,\ldots,\lambda_m$ are unique coefficients with $\sum\lambda_j=1$ such that $u=\sum\lambda_j(u^*+\Delta u^j)$, let $\tilde{v}(u)=\sum\lambda_j(x^{j+1}-x^j)/\varepsilon$. \\
        
		$12$ \quad \qquad Take $u^*\in\argmax G(\phi|_{[0,t_0+(m+1)\varepsilon]},\tilde{v}(u))$ \\
		$13$ \quad \qquad $t_0:=t_0+(m+1)\varepsilon$ \\
		$14$ \quad \textbf{until} the end of system run \\
		\hline
	\end{tabular}
\end{table}

We note that while Algorithm \ref{alg} prescribes a particular form for $\Delta^i$ in line 5, the local dynamics of the system can be learned by using any $\Delta^i$ such that $u^*+\Delta u^1,\ldots,u^*+\Delta u^m\in\cU$ is an affinely independent set. Algorithm \ref{alg} includes a particular form for $\Delta^i$ in order to more easily establish a bound on the degree of suboptimality of the resulting control law. Such a bound, given in the following section, is dependent on parameters $0<\delta\leq 1$ and $\varepsilon>0$. With an appropriate choice of $\delta$ and $\varepsilon$, we can ensure that Algorithm \ref{alg} produces a control law which comes arbitrarily close to solving Problem \ref{pro2}. This choice of $\delta$ and $\varepsilon$ will naturally depend on the bounds $M_0$ and $M_1$ on the norms and Lipschitz constants of $f$ and $g_i$, and the regularity of goodness function $G$.

\begin{remark}
\label{twoalgs}
Algorithm \ref{alg} does not distinguish between the learning and control phases of the algorithm: the algorithm learns the local dynamics as a result of performing slight perturbations of the previously established optimal control law. However, we note that these two phases can be decoupled by a minor modification of the above algorithm. After learning the local dynamics through consecutively applying any $m+1$ affinely independent controls in a short time period $\varepsilon'<\varepsilon$, the system can then apply the optimal control law derived from these dynamics for the remaining $\varepsilon-\varepsilon'$ time in the cycle, after which it begins a new cycle by learning the new local dynamics. While this modification results in a short $\varepsilon'$ period of time inside every iteration of the learn-control cycle without any guarantees on the degree of myopic suboptimality, it offers computational benefits in the case where the optimal control law does not rapidly change over time.

In light of this modification, we note that the performance bounds presented in the subsequent section apply to Algorithm \ref{alg} as originally presented. Analogous proofs could be constructed for the modified algorithm as above. For computational reasons, the algorithm used in simulations in Section \ref{damair} contains the modification described above, while the work in Section \ref{damosc} uses Algorithm \ref{alg} directly. 
\end{remark}

\section{Performance Bounds}
\label{resb}

\subsection{Learning}
\label{learph}

We claim that the procedure in lines 5--11 of Algorithm \ref{alg} produces a good approximation $\tilde{v}:\cU\to\RR^n$ of the map $v_x:\cU\to\RR^n$, with $x=x^{m+1}$ defined as in the algorithm.

We first note that, for every point $u^*\in\cU=[-1,1]^m$, if $u^*_i\geq 0$, then $-1\leq u^*_i-\delta<1$, and if $u^*_i<0$, then $-1<u^*_i+\delta<1$. Hence, for all $u^*\in\cU$ we can indeed choose $\Delta u^i=\delta e_i$ or $\Delta u^i=-\delta e_i$  such that $u^*+\Delta u^i\in\cU$, as stipulated by line 5 of Algorithm \ref{alg}. We also note that $u^*+\Delta u^i$, $i\in\{0,\ldots,m\}$, trivially form an affinely independent set.

We first describe the remainder of the procedure informally. We denote $u^*+\Delta u^i$ by $u^i$.

For each $j\in\{0,1,\ldots,m\}$, vector 
\begin{equation}
\label{appr1a}
\frac{x^{j+1}-x^j}{\varepsilon}=\frac{\phi_{u^j}(\varepsilon,x^j)-\phi_{u^j}(0,x^j)}{\varepsilon}
\end{equation} approximates the value $d\phi_{u^j}(\varepsilon,x^j)/dt=v_{x^{j+1}}(u^j)$, where we recall that $v_y(u)$ is defined by $v_y(u)=f(y)+\sum g_i(y)u_i$. Additionally, since $x^{j+1}$ and $x=x^{m+1}$ are not far apart (because $x^{m+1}$ is the state of the trajectory just $(m-j)\varepsilon$ later than $x^{j+1}$), $v_{x^{j+1}}(u^j)\approx v_{x^{m+1}}(u^j)=v_x(u^j)$.

Finally, since $u^0,\ldots,u^m$ are affinely independent, for every $u$ there exist unique $\lambda_0,\ldots,\lambda_m\in\RR$ such that $u=\lambda_0u^0+\ldots+\lambda_mu^m$ and $\lambda_0+\ldots+\lambda_m=1$. Thus, $$
f(x)+\sum_{i=1}^m g_i(x)u_i=\sum_{j=0}^m\lambda_j\left(f(x)+\sum_{i=1}^m g_i(x)u^j_i\right)\textrm{,}$$
i.e. $v_{x}(u)=\sum \lambda_jv_{x}(u^j)$. 
Since we already have approximations for $v_x(u^j)$ from \eqref{appr1a} and the subsequent discussion, we can thus approximate $v_x(u)$ by taking 
\begin{equation}
\label{appr3}
v_x(u)\approx \sum_{j=0}^m\lambda_j \frac{x^{j+1}-x^j}{\varepsilon}\textrm{.}
\end{equation}

It can be formally shown that approximation \eqref{appr3} produces an error no worse than linear in $\varepsilon$. This result is given in Theorem \ref{learapr}, while foregoing technical claims are given in Lemma \ref{lem1}.

\begin{lemma}
\label{lem1}
Let $u$ be the control law produced in one \textbf{repeat} loop of Algorithm \ref{alg} (i.e., $u$ is the concatenation of $u^0,u^1,\ldots,u^m$). Also, let $x^j$ be defined as in Algorithm \ref{alg}, where all $x^j$ belong to the same iteration of the \textbf{repeat} loop. Then, the following holds:
\begin{enumerate}[(i)]
\item For all $t_1,t_2\in[0,(m+1)\varepsilon)$, $$\|\phi_u(t_1,x^0)-\phi_u(t_2,x^0)\|\leq M_0(m+1)|t_1-t_2|\textrm{.}$$ In particular, $\|x^j-x^k\|\leq M_0(m+1)|j-k|\varepsilon$ for all $j,k\in\{0,\ldots,m+1\}$.
\item For all $j\in\{0,\ldots,m\}$, \begin{equation*}
\Bigg\|\frac{x^{j+1}-x^j}{\varepsilon}-  v_{x^{j+1}}(u^j)\Bigg\|\leq \frac{M_0M_1(m+1)^2\varepsilon}{2}\textrm{.}
\end{equation*}
\item For all $j\in\{0,\ldots,m\}$, \begin{equation*}
\left\|v_{x^{j+1}}(u^j)-v_x(u^j)\right\|\leq M_0M_1(m+1)^3\varepsilon\textrm{.}
\end{equation*}
\end{enumerate}
\end{lemma}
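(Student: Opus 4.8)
The plan is to establish the three parts in the stated order, since both (ii) and (iii) rest on the displacement estimate of (i). Two elementary facts will carry essentially the whole argument. First, along any control with values in $\cU$ the speed is bounded: at any time $s$, using $\|f\|,\|g_i\|\leq M_0$ and $|u_i|\leq 1$,
\begin{equation*}
\|\dot\phi_u(s)\|=\Bigg\|f(\phi_u(s))+\sum_{i=1}^m g_i(\phi_u(s))u_i\Bigg\|\leq M_0+mM_0=(m+1)M_0\textrm{.}
\end{equation*}
Second, the map $y\mapsto v_y(u)$ is Lipschitz \emph{uniformly} in $u\in\cU$: since $v_y(u)-v_z(u)=(f(y)-f(z))+\sum_{i=1}^m(g_i(y)-g_i(z))u_i$, the Lipschitz bounds on $f,g_i$ together with $|u_i|\leq 1$ give $\|v_y(u)-v_z(u)\|\leq(m+1)M_1\|y-z\|$. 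I will invoke these two inequalities repeatedly.

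For (i), I would write $\phi_u(t_1,x^0)-\phi_u(t_2,x^0)=\int_{t_2}^{t_1}\dot\phi_u(s)\,ds$ and apply the speed bound, yielding $\|\phi_u(t_1,x^0)-\phi_u(t_2,x^0)\|\leq(m+1)M_0|t_1-t_2|$. Measuring time from the start of the \textbf{repeat} loop so that $x^j=\phi_u(j\varepsilon,x^0)$, the choice $t_1=j\varepsilon$, $t_2=k\varepsilon$ (extending to the closed endpoint $(m+1)\varepsilon$ by continuity of $\phi_u$) gives the pointwise bound $\|x^j-x^k\|\leq(m+1)M_0|j-k|\varepsilon$.

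For (ii), I would reparametrize the $j$-th subinterval by $\tau\in[0,\varepsilon]$, writing $\psi$ for the trajectory segment with $\psi(0)=x^j$, $\psi(\varepsilon)=x^{j+1}$, and $\dot\psi(\tau)=v_{\psi(\tau)}(u^j)$ (the control is the constant $u^j$ there). Then $x^{j+1}-x^j=\int_0^\varepsilon v_{\psi(\tau)}(u^j)\,d\tau$, so the finite difference minus the endpoint velocity $v_{x^{j+1}}(u^j)=v_{\psi(\varepsilon)}(u^j)$ equals $\tfrac1\varepsilon\int_0^\varepsilon\big(v_{\psi(\tau)}(u^j)-v_{\psi(\varepsilon)}(u^j)\big)\,d\tau$. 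Bounding the integrand by the uniform Lipschitz estimate and then $\|\psi(\tau)-\psi(\varepsilon)\|\leq(m+1)M_0(\varepsilon-\tau)$ from (i) yields integrand $\leq(m+1)^2M_0M_1(\varepsilon-\tau)$; since $\int_0^\varepsilon(\varepsilon-\tau)\,d\tau=\varepsilon^2/2$, dividing by $\varepsilon$ produces exactly $\tfrac{(m+1)^2M_0M_1\varepsilon}{2}$.

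For (iii), I would apply the uniform Lipschitz estimate directly with $x=x^{m+1}$: $\|v_{x^{j+1}}(u^j)-v_x(u^j)\|\leq(m+1)M_1\|x^{j+1}-x^{m+1}\|$, and then bound $\|x^{j+1}-x^{m+1}\|\leq(m+1)M_0(m-j)\varepsilon$ by (i). Using $m-j\leq m+1$ for $j\geq 0$ collapses the product to the stated $(m+1)^3M_0M_1\varepsilon$. I do not expect a genuine obstacle anywhere; the only point needing care is bookkeeping of the powers of $(m+1)$ — one factor from the speed bound, one from the Lipschitz constant of $v$, and in (iii) a deliberately crude replacement of $m-j$ by $m+1$ to match the (slightly loose) claimed bound.
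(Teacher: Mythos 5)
Your proposal is correct and follows essentially the same route as the paper's proof: part (i) by integrating the velocity and using the speed bound $(m+1)M_0$, part (ii) by writing the finite difference as an average of $v_{\psi(\tau)}(u^j)$ and applying the uniform Lipschitz bound $(m+1)M_1$ on $y\mapsto v_y(u)$ together with (i), and part (iii) by the same Lipschitz bound plus the crude estimate $m-j\leq m+1$. The only cosmetic difference is that you measure the displacement in (ii) to the endpoint ($\varepsilon-\tau$) while the paper uses $t$ from the start; both integrate to $\varepsilon^2/2$ and give the identical bound.
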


\begin{theorem}
	\label{learapr}
	Let $x^0,\ldots,x^{m+1}=x$ be as in Algorithm \ref{alg}, and let $u\in\cU$. Let $\lambda_0,\ldots,\lambda_m\in\RR$ be such that $u=\lambda_0u^0+\ldots+\lambda_mu^m$ and $\lambda_0+\ldots+\lambda_m=1$. Then, \begin{equation*}
\Bigg\|v_x(u)-\sum_{j=0}^m\lambda_j \frac{x_{j+1}-x_j}{\varepsilon}\Bigg\|\leq 2M_0M_1\left(m+1\right)^3\varepsilon\frac{4m^{\frac{3}{2}}+\delta}{\delta}\textrm{.}
\end{equation*}
\end{theorem}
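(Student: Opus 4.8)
The plan is to exploit the affinity of $v_x$ in $u$ to reduce the global approximation error to a weighted sum of the pointwise errors already controlled by Lemma \ref{lem1}, and then to bound the barycentric weights $\lambda_j$ explicitly. Since $v_x(u)=f(x)+\sum_{i=1}^m g_i(x)u_i$ is affine in $u$ and $\sum_{j=0}^m\lambda_j=1$, I would first record the identity $v_x(u)=\sum_{j=0}^m\lambda_j v_x(u^j)$ (exactly the computation preceding \eqref{appr3}). Subtracting the approximation and applying the triangle inequality then gives
\begin{equation*}
\Bigg\|v_x(u)-\sum_{j=0}^m\lambda_j\frac{x^{j+1}-x^j}{\varepsilon}\Bigg\|\leq\sum_{j=0}^m|\lambda_j|\cdot\Bigg\|v_x(u^j)-\frac{x^{j+1}-x^j}{\varepsilon}\Bigg\|\textrm{,}
\end{equation*}
so the remaining work is to bound each factor $\|v_x(u^j)-(x^{j+1}-x^j)/\varepsilon\|$ and the coefficient sum $\sum_j|\lambda_j|$ separately.

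For the per-term error I would insert the intermediate quantity $v_{x^{j+1}}(u^j)$ and split $\|v_x(u^j)-(x^{j+1}-x^j)/\varepsilon\|\leq\|v_x(u^j)-v_{x^{j+1}}(u^j)\|+\|v_{x^{j+1}}(u^j)-(x^{j+1}-x^j)/\varepsilon\|$. The first term is bounded by $M_0M_1(m+1)^3\varepsilon$ via Lemma \ref{lem1}(iii), and the second by $\tfrac12 M_0M_1(m+1)^2\varepsilon$ via Lemma \ref{lem1}(ii). Since $\tfrac12(m+1)^2\leq(m+1)^3$, their sum is at most $2M_0M_1(m+1)^3\varepsilon$, uniformly in $j$. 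This step is entirely routine once Lemma \ref{lem1} is in hand.

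The crux is bounding $\sum_{j=0}^m|\lambda_j|$, and this is where the special form of the perturbations chosen in line 5 of Algorithm \ref{alg} is essential. Writing $u^0=u^*$ and $u^j=u^*\pm\delta e_j$ and using $\sum_j\lambda_j=1$, I would substitute into $u=\sum_j\lambda_j u^j$ to obtain $u-u^*=\sum_{j=1}^m(\pm\delta)\lambda_j e_j$. Reading off the $j$-th coordinate yields $\lambda_j=\pm(u_j-u^*_j)/\delta$ for $j\in\{1,\ldots,m\}$, whence $|\lambda_j|\leq\|u-u^*\|/\delta\leq 2\sqrt{m}/\delta$ because $u,u^*\in\cU=[-1,1]^m$. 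The remaining coordinate satisfies $|\lambda_0|=|1-\sum_{j=1}^m\lambda_j|\leq 1+2m^{3/2}/\delta$, so that $\sum_{j=0}^m|\lambda_j|\leq 1+4m^{3/2}/\delta=(4m^{3/2}+\delta)/\delta$. Combining this with the uniform per-term bound $2M_0M_1(m+1)^3\varepsilon$ produces exactly the claimed inequality.

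I expect this last step to be the only genuine obstacle: everything else is assembling Lemma \ref{lem1} through the triangle inequality and the affinity of $v_x$. The point requiring care is that the diagonal structure in $\{\lambda_1,\ldots,\lambda_m\}$ appears only after subtracting $u^*$ and invoking the affine constraint $\sum_j\lambda_j=1$; this is also where the factor $1/\delta$ (from inverting the perturbation size) and the $\sqrt{m}$ (from bounding a single coordinate of $u-u^*$ by its Euclidean norm over the cube) enter, accounting for the $m^{3/2}/\delta$ growth in the final constant. A componentwise estimate $|u_j-u^*_j|\leq 2$ would in fact give the sharper $m/\delta$, but the $\sqrt{m}$-to-$1$-norm conversion suffices for the stated bound.
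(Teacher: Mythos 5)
Your proposal is correct and follows essentially the same route as the paper's own proof: the affine identity $v_x(u)=\sum_j\lambda_j v_x(u^j)$, the triangle inequality reducing everything to Lemma \ref{lem1}(ii)--(iii), and the coordinate-wise extraction $(u-u^*)_j=\pm\lambda_j\delta$ giving $|\lambda_j|\leq 2\sqrt{m}/\delta$ and $|\lambda_0|\leq 1+2m^{3/2}/\delta$. Your closing observation that a componentwise bound $|u_j-u^*_j|\leq 2$ would sharpen the constant is a valid aside, but the argument as given matches the paper's.
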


For the sake of readability, the proofs of Lemma \ref{lem1} and Theorem \ref{learapr} are deferred to Appendix \ref{proofs}. From these two results, we conclude that the procedure in lines 5--11 indeed obtains reasonable approximations of the dynamics \eqref{thesys} at point $x=x^{m+1}$. In the next part, we propose how to use these learned dynamics to develop a control law $u^*$ which approximately solves Problem \ref{pro2}.

\subsection{Control}
\label{seccon}

From Theorem \ref{learapr} it follows that Algorithm \ref{alg} can calculate $v_{x_{m+1}}(u)=f(x_{m+1})+\sum_{i=1}^m g_i(x_{m+1})u_i$ for any value $u\in\cU$ with arbitrary precision. Thus, we are able to accurately calculate $G(\phi|_{[0,t_0+m+1\varepsilon]},v_{x^{m+1}}(u))$ for any control value $u$. Additionally, for a fixed $x$ and $\phi$, $u\mapsto G(\phi,v_x(u))$ is a real function of a bounded variable $u\in\cU$. Depending on the properties of $G$, $\max$ and $\argmax$ of this function can be hence found by an analytic or numerical optimization method; we briefly discussions numerical considerations of the solvability of this optimization problem in Section \ref{secfut}. Thus, the above approximations notwithstanding, we can find an optimal control input $u^*$ to be applied when the previous system trajectory is given by $\phi$, and the trajectory is at $x$.

As shown in Algorithm \ref{alg}, our plan is to use the found optimal control input $u^*$ for a short time $(m+1)\varepsilon$ after the system is at point $x^{m+1}$. During that time, the system will learn new local dynamics around the new $x^{m+1}$, and the whole procedure will be repeated again. We note that this process does not necessarily generate an optimal control law, as the best control input $u^*$ for a certain point $x^{m+1}$ may no longer be the best immediately after the system leaves the said point. However, if the function $G$ is ``tame enough'', $u^*$ will still be a good approximation of the optimal control input. We will show that this fact remains true even if we take into account that the input $u^*$ was calculated based on a slightly incorrect learned dynamics at $x^{m+1}$. We first introduce a measure of tameness of $G$.

\begin{definition}
	\label{deflip}
	Function $G:\cF\times\RR^n\to\RR$ has {\em Lipschitz constant} $L$ if for all $\phi_1|_{[0,T_1]},\phi_2|_{[0,T_2]}\in\cF$ and $v_1,v_2\in\RR^n$, the following holds:
	\begin{equation}
    \label{eqlip}
    \big|G (\phi_1|_{[0,T_1]},v_1)-G(\phi_2|_{[0,T_2]},v_2)\big|
    \leq L\left(d\left(\phi_1|_{[0,T_1]},\phi_2|_{[0,T_2]}\right)+\|v_1-v_2\|\right)\textrm{,}
    \end{equation}
	where 
	\begin{equation}
    \label{defd}
    d \left(\phi_1|_{[0,T_1]},\phi_2|_{[0,T_2]}\right) =|T_1-T_2|+\max_{t\in[0,\min(T_1,T_2)]}\|\phi_1(t)-\phi_2(t)\|\textrm{.}
	\end{equation}
\end{definition}

Definition \ref{deflip} explicitly makes note of the usually standard notion of a Lipschitz constant, because $d$ as defined in \eqref{defd} is not a proper metric on $\cF$. Namely, it does not satisfy the triangle inequality. Thus, the existence of a Lipschitz constant as in Definition \ref{deflip} does not imply continuity of $G$ in the standard sense.

The claims of the beginning of this section are now formalized by the following result, the proof of which is in Appendix \ref{proofs}.

\begin{theorem}
	\label{maith}
	Let $\phi_1|_{[0,T_1]},\phi_2|_{[0,T_2]}\in\cF$, and let $\nu>0$. Define $x=\phi_1(T_1)$ and $y=\phi_2(T_2)$. Assume that $G$ has Lipschitz constant $L$ in the sense of Definition \ref{deflip}, and let $u^*$ be the optimal control at $x$ under {\em approximately} learned local dynamics with the previous trajectory $\phi_1|_{[0,T_1]}$, i.e., \begin{equation*}
G \left(\phi_1|_{[0,T_1]},\tilde{f}+\sum_{i=1}^m \tilde{g}_iu^*_i\right) =\max_{u\in\cU} G\left(\phi_1|_{[0,T_1]},\tilde{f}+\sum_{i=1}^m \tilde{g}_iu_i\right)\textrm{,}
\end{equation*}
where $\|\tilde{f}+\sum_{i=1}^m \tilde{g}_iu_i-(f(x)+\sum_{i=1}^m g_i(x)u_i)\|\leq\nu$ for all $u\in\cU$. Then, 	
	\begin{equation*}
	\begin{split}
 &\left |\max_u G \left(\phi_2|_{[0,T_2]},v_y(u)\right) -G\left(\phi_2|_{[0,T_2]},v_y(u^*)\right)\right| \\
	& \textrm{\hskip 10pt}\leq 2Ld\left(\phi_1|_{[0,T_1]},\phi_2|_{[0,T_2]}\right)+2LM_1(m+1)\|x-y\| +2L\nu\textrm{.}
	\end{split}
	\end{equation*}
\end{theorem}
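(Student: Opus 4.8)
The plan is to exploit the optimality of $u^*$ in the \emph{approximate} problem posed at $x$ with past trajectory $\phi_1$, and to transfer it, via the Lipschitz property of $G$, into a near-optimality statement for the \emph{true} problem posed at $y$ with past trajectory $\phi_2$. First I would note that $u^*\in\cU$ is a feasible competitor in the maximization $\max_{u\in\cU} G(\phi_2|_{[0,T_2]},v_y(u))$, so $G(\phi_2|_{[0,T_2]},v_y(u^*))\le\max_u G(\phi_2|_{[0,T_2]},v_y(u))$ and the quantity inside the absolute value is nonnegative; it therefore suffices to bound it from above. Let $u^{**}\in\argmax_{u\in\cU} G(\phi_2|_{[0,T_2]},v_y(u))$ be a true maximizer, so that the target quantity equals $G(\phi_2|_{[0,T_2]},v_y(u^{**}))-G(\phi_2|_{[0,T_2]},v_y(u^*))$.

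The crux is a single uniform comparison estimate valid for every $u\in\cU$:
\[
\Bigl|G(\phi_2|_{[0,T_2]},v_y(u))-G\bigl(\phi_1|_{[0,T_1]},\tilde f+\textstyle\sum_{i=1}^m\tilde g_iu_i\bigr)\Bigr|\le L E,\qquad E:=d(\phi_1|_{[0,T_1]},\phi_2|_{[0,T_2]})+M_1(m+1)\|x-y\|+\nu.
\]
This follows from the Lipschitz inequality \eqref{eqlip} once the velocity discrepancy is controlled. By the triangle inequality, $\|v_y(u)-(\tilde f+\sum_i\tilde g_iu_i)\|\le\|v_y(u)-v_x(u)\|+\|v_x(u)-(\tilde f+\sum_i\tilde g_iu_i)\|$; the second term is at most $\nu$ by hypothesis. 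For the first term I would write $v_y(u)-v_x(u)=(f(y)-f(x))+\sum_{i=1}^m(g_i(y)-g_i(x))u_i$ and invoke the Lipschitz bounds $M_1$ on $f$ and each $g_i$ together with $\|u\|_\infty\le 1$, yielding $\|v_y(u)-v_x(u)\|\le M_1(m+1)\|x-y\|$, which accounts for the drift term plus the $m$ control channels.

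With this uniform estimate, I would close the argument by a three-term sandwich. Applying it at $u=u^{**}$ gives $G(\phi_2,v_y(u^{**}))\le G(\phi_1,\tilde f+\sum_i\tilde g_iu^{**}_i)+LE$; the optimality of $u^*$ in the approximate problem at $x$ gives $G(\phi_1,\tilde f+\sum_i\tilde g_iu^{**}_i)\le G(\phi_1,\tilde f+\sum_i\tilde g_iu^{*}_i)$; and applying the estimate again at $u=u^*$ gives $G(\phi_1,\tilde f+\sum_i\tilde g_iu^{*}_i)\le G(\phi_2,v_y(u^*))+LE$. Chaining these three inequalities yields $G(\phi_2,v_y(u^{**}))-G(\phi_2,v_y(u^*))\le 2LE$, which, after substituting $E$, is exactly the claimed bound $2Ld(\phi_1|_{[0,T_1]},\phi_2|_{[0,T_2]})+2LM_1(m+1)\|x-y\|+2L\nu$.

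I do not expect a genuine obstacle: this is the standard ``a maximizer of a nearby problem is near-optimal for the original problem'' sandwich. The only care needed is bookkeeping—ensuring that both the true-versus-approximate substitution and the $x$-versus-$y$ substitution are absorbed into the single bound $LE$ and invoked twice, so the total is $2LE$ rather than $LE$, and correctly assembling the factor $(m+1)$ in the velocity estimate. I would also emphasize that the failure of $d$ to satisfy the triangle inequality (noted after Definition \ref{deflip}) is harmless here, since I only ever apply \eqref{eqlip} \emph{directly} between $\phi_1$ and $\phi_2$ and never compose Lipschitz estimates through an intermediate trajectory.
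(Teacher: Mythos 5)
Your proposal is correct and follows essentially the same route as the paper: both rest on the same two Lipschitz comparisons between $G(\phi_2,v_y(\cdot))$ and $G(\phi_1,\tilde f+\sum_i\tilde g_i(\cdot)_i)$ (with the velocity discrepancy bounded by $M_1(m+1)\|x-y\|+\nu$) together with the two optimality properties of $u^*$ and the true maximizer. The only difference is presentational — you chain the three inequalities directly, whereas the paper establishes the cross-term bound by a short contradiction argument before applying the triangle inequality — and the two are logically equivalent.
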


\begin{remark}
We note that the claim of Theorem \ref{maith}, as well as all the results that will be presented in the remainder of this paper, can be easily adapted to hold under a relaxed assumption that $G$ has a {\em local} Lipschitz constant $L$, i.e., that \eqref{eqlip} is only guaranteed to hold when $d\left(\phi_1|_{[0,T_1]},\phi_2|_{[0,T_2]}\right)<\psi$ for some $\psi>0$. However, this relaxation brings more technicality into the obtained bounds, and we hence omit further discussion on this topic.
\end{remark}

In the context of Algorithm \ref{alg}, the result of Theorem \ref{maith} can be interpreted in the following way: let us take $T_1$ to be the time at the beginning of one \textbf{repeat} loop in the algorithm, $\phi_1|_{[0,T_1]}$ to be the trajectory of the system until time $T_1$, $T_2$ to be any time in the interval $[T_1,T_1+(m+1)\varepsilon)$, and $\phi_2|_{[0,T_2]}$ to be the trajectory of the system until time $T_2$. Then, Theorem \ref{maith} essentially proves that Algorithm \ref{alg} results in an almost optimal policy with respect to \eqref{maxg}. There is only one major point that remains to be discussed. In Algorithm \ref{alg}, we do not apply {\em just} $u^*$ as the control. In order to facilitate learning, we modify this $u^*$ by some small $\Delta u^i$. We need to show that such a small control perturbation will not significantly impact the degree of suboptimality of Algorithm \ref{alg}. Corollary \ref{cor2} (with the proof in Appendix \ref{proofs}) deals with this issue.

\begin{corollary}
	\label{cor2}
	Assume the same notation as in Theorem \ref{maith}. Let $\delta>0$ and let $\tilde{u}\in\RR^m$, $\|\tilde{u}\|\leq\delta$. Then, 	
	\begin{equation*}
	\begin{split}
&	 \left|\max_u G  \left(\phi_2|_{[0,T_2]},v_y(u)\right) -G\left(\phi_2|_{[0,T_2]},v_y(u^*+\tilde{u})\right)\right|\\
	& \textrm{\hskip 10pt} \leq 2Ld\left(\phi_1|_{[0,T_1]},\phi_2|_{[0,T_2]}\right)+2LM_1(m+1)\|x-y\|+2L\nu+LM_0(m+1)\delta\textrm{.}
	\end{split}
	\end{equation*}
\end{corollary}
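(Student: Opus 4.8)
The plan is to reduce Corollary \ref{cor2} to Theorem \ref{maith} by a single triangle-inequality step that isolates the effect of perturbing the applied control from $u^*$ to $u^*+\tilde{u}$. Since the quantity $\max_u G(\phi_2|_{[0,T_2]},v_y(u))$ appears unchanged in both statements, I would insert the intermediate term $G(\phi_2|_{[0,T_2]},v_y(u^*))$ and split the left-hand side as
$$\left|\max_u G(\phi_2|_{[0,T_2]},v_y(u))-G(\phi_2|_{[0,T_2]},v_y(u^*+\tilde{u}))\right|\leq A+B\textrm{,}$$
where $A=\left|\max_u G(\phi_2|_{[0,T_2]},v_y(u))-G(\phi_2|_{[0,T_2]},v_y(u^*))\right|$ and $B=\left|G(\phi_2|_{[0,T_2]},v_y(u^*))-G(\phi_2|_{[0,T_2]},v_y(u^*+\tilde{u}))\right|$.

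The term $A$ is exactly the quantity bounded in Theorem \ref{maith}, so it is at once at most $2Ld(\phi_1|_{[0,T_1]},\phi_2|_{[0,T_2]})+2LM_1(m+1)\|x-y\|+2L\nu$, with no further work required. For the term $B$ I would invoke the Lipschitz property of $G$ from Definition \ref{deflip}: holding the trajectory argument $\phi_2|_{[0,T_2]}$ fixed in both slots makes the $d(\cdot,\cdot)$ contribution in \eqref{eqlip} vanish, yielding $B\leq L\|v_y(u^*)-v_y(u^*+\tilde{u})\|$. I would then unfold the definition $v_y(u)=f(y)+\sum_{i=1}^m g_i(y)u_i$, so that $v_y(u^*)-v_y(u^*+\tilde{u})=-\sum_{i=1}^m g_i(y)\tilde{u}_i$, and estimate using the uniform bound $\|g_i(y)\|\leq M_0$ together with the componentwise inequality $|\tilde{u}_i|\leq\|\tilde{u}\|\leq\delta$ to obtain $\|v_y(u^*)-v_y(u^*+\tilde{u})\|\leq\sum_{i=1}^m\|g_i(y)\|\,|\tilde{u}_i|\leq mM_0\delta\leq(m+1)M_0\delta$. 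Hence $B\leq LM_0(m+1)\delta$, and adding the bounds for $A$ and $B$ gives precisely the claimed inequality.

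There is no genuine obstacle here; the corollary is a routine consequence of Theorem \ref{maith} and the Lipschitz hypothesis on $G$. The only point that needs minor care is the norm bookkeeping: the constraint on the perturbation is stated in the Euclidean norm, so I would pass to the componentwise bound $|\tilde{u}_i|\leq\|\tilde{u}\|_2\leq\delta$ before summing over the $m$ coordinates, which produces the factor $m$; I relax this to $m+1$ purely for uniformity with the $(m+1)$ factors appearing in the other terms of the estimate.
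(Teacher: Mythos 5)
Your proposal is correct and follows essentially the same route as the paper's proof: the same triangle-inequality split through $G(\phi_2|_{[0,T_2]},v_y(u^*))$, with the first term bounded by Theorem \ref{maith} and the second by the Lipschitz property of $G$ applied to $\|\sum_{i=1}^m g_i(y)\tilde{u}_i\|\leq M_0 m\delta< M_0(m+1)\delta$. No gaps.
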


Finally, the following result translates the bounds in Theorem \ref{maith} and Corollary \ref{cor2} into a bound in terms of algorithm parameters $\varepsilon$ and $\delta$. Thus, Theorem \ref{bigthm}, with the proof in Appendix \ref{proofs}, finally provides a bound on the degree of suboptimality of Algorithm \ref{alg}. For notational purposes, we assume that the run of system \eqref{thesys} is of infinite length. However, the same result clearly holds for any finite length.

\begin{theorem}
	\label{bigthm}
	Let $x_0\in\cX$. Let $u^+:[0,+\infty)\to\cU$ be the control law used in Algorithm \ref{alg}. Assume that $G$ has Lipschitz constant $L$. Then, for all $t\geq (m+1)\varepsilon$, 
	\begin{equation}
	\label{subg}
	\begin{split}
	\Bigg|G & \left(\phi_{u^+}(\cdot,x_0)|_{[0,t]},\frac{d\phi_{u^+}(0+,x)}{dt}\right) -\max_{u\in\cU} G\left(\phi_{u^+}(\cdot,x_0)|_{[0,t]},\frac{d\phi_u(0+,x)}{dt}\right)\Bigg | \\
	& \textrm{\hskip 10pt} \leq 6L(M_0+1)(M_1+1)(m+1)^3\left(1+\frac{4m\sqrt{m}}{\delta}\right)\varepsilon+LM_0(m+1)\delta\textrm{,}
	\end{split}
	\end{equation}
	where $x=\phi_{u^+}(t,x_0)$.
\end{theorem}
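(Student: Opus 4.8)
The plan is to recognize that the left-hand side of \eqref{subg} is, for each fixed $t\geq (m+1)\varepsilon$, precisely an instance of the quantity bounded in Corollary \ref{cor2}, and then to control the three data-dependent quantities $d(\phi_1,\phi_2)$, $\|x-y\|$, and $\nu$ appearing in that bound by means of Lemma \ref{lem1} and Theorem \ref{learapr}.

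First I would set up the notation. Fix $t\geq (m+1)\varepsilon$ and let $T_1$ be the start time of the \textbf{repeat} loop of Algorithm \ref{alg} containing $t$, so that $t\in[T_1,T_1+(m+1)\varepsilon)$ and $u^*$ is the control computed in line $12$ at the end of the previous loop; the hypothesis $t\geq(m+1)\varepsilon$ guarantees such a genuinely optimized $u^*$ exists (rather than the initialization $u^*=0$). Put $\phi_1=\phi_{u^+}(\cdot,x_0)|_{[0,T_1]}$, $\phi_2=\phi_{u^+}(\cdot,x_0)|_{[0,t]}$, write $x'=\phi_{u^+}(T_1,x_0)$ for the state at time $T_1$, and let $x=\phi_{u^+}(t,x_0)$ be the current state (which plays the role of $y$ in Theorem \ref{maith} and Corollary \ref{cor2}). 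Line $12$ shows $u^*$ maximizes $u\mapsto G(\phi_1,\tilde v(u))$ for the approximate dynamics $\tilde v$ of line $11$, so Theorem \ref{maith} applies with $\tilde f+\sum\tilde g_iu_i=\tilde v(u)$ and with $\nu$ the bound of Theorem \ref{learapr} evaluated at $x'=x^{m+1}$. Since the control actually applied at time $t$ is $u^*+\Delta u^j$ with $\|\Delta u^j\|\leq\delta$, the velocity $d\phi_{u^+}(0+,x)/dt$ equals $v_x(u^*+\Delta u^j)$, and Corollary \ref{cor2} (with $\tilde u=\Delta u^j$) bounds the left-hand side of \eqref{subg} by $2Ld(\phi_1,\phi_2)+2LM_1(m+1)\|x'-x\|+2L\nu+LM_0(m+1)\delta$.

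Next I would estimate the three quantities. Because $\phi_1$ and $\phi_2$ are restrictions of one and the same trajectory, they coincide on $[0,T_1]$, so the maximum term in \eqref{defd} vanishes and $d(\phi_1,\phi_2)=t-T_1\leq(m+1)\varepsilon$. For $\|x'-x\|$ I would apply Lemma \ref{lem1}(i) on the current loop, with base point $x'=x^0$, giving $\|x'-x\|\leq M_0(m+1)(t-T_1)\leq M_0(m+1)^2\varepsilon$. For $\nu$ I would simply invoke Theorem \ref{learapr}, namely $\nu=2M_0M_1(m+1)^3\varepsilon(4m^{3/2}+\delta)/\delta$. Substituting these estimates into the Corollary \ref{cor2} bound yields
\begin{equation*}
\text{LHS}\leq 2L(m+1)\varepsilon+2LM_0M_1(m+1)^3\varepsilon+4LM_0M_1(m+1)^3\varepsilon\frac{4m^{3/2}+\delta}{\delta}+LM_0(m+1)\delta\textrm{.}
\end{equation*}

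The remaining and principal work is the purely algebraic verification that this expression is dominated by the right-hand side of \eqref{subg}. The $\delta$-term matches verbatim, so only the $\varepsilon$-coefficient needs attention. Splitting the third term as $4LM_0M_1(m+1)^3\varepsilon(4m^{3/2}/\delta)+4LM_0M_1(m+1)^3\varepsilon$, I would group the terms not carrying a factor $1/\delta$: the two contributions $2LM_0M_1(m+1)^3\varepsilon$ and $4LM_0M_1(m+1)^3\varepsilon$ sum exactly to $6LM_0M_1(m+1)^3\varepsilon$, while the stray $2L(m+1)\varepsilon\leq 2L(m+1)^3\varepsilon$ is absorbed using the slack $(M_0+1)(M_1+1)-M_0M_1=M_0+M_1+1\geq 1$, so together they are at most $6L(M_0+1)(M_1+1)(m+1)^3\varepsilon$. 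The single $1/\delta$-term equals $16LM_0M_1(m+1)^3\varepsilon\,m^{3/2}/\delta$, which is at most $24L(M_0+1)(M_1+1)(m+1)^3\varepsilon\,m^{3/2}/\delta$ since $16\leq 24$ and $M_0M_1\leq(M_0+1)(M_1+1)$. Adding the two groups reproduces exactly $6L(M_0+1)(M_1+1)(m+1)^3(1+4m\sqrt m/\delta)\varepsilon$. The main obstacle is thus not conceptual but careful bookkeeping: one must choose the grouping and the crude majorizations $M_0\leq M_0+1$, $M_1\leq M_1+1$, $(m+1)\leq(m+1)^3$ so that all constants collapse into the single coefficient $6$, rather than the $8$ produced by a naive term-by-term majorization; the decomposition $(M_0+1)(M_1+1)=M_0M_1+(M_0+M_1+1)$ is what makes the stray lower-order term fit.
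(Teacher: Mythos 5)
Your proposal is correct and follows essentially the same route as the paper's own proof: apply Corollary \ref{cor2} with $\nu$ taken from Theorem \ref{learapr} (the learning error at the state $x^0$ where the current $u^*$ was computed), bound $d(\phi_1,\phi_2)$ by $(m+1)\varepsilon$ and $\|x-x^0\|$ by $M_0(m+1)^2\varepsilon$ via Lemma \ref{lem1}(i), and then majorize the constants. The only difference is that you make explicit the final bookkeeping that the paper dismisses as ``taking some very liberal bounds,'' and your grouping does verify the stated coefficient $6L(M_0+1)(M_1+1)$.
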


Theorem \ref{bigthm} is the central result of the theoretical discussions of this paper. It shows that, for any $\eta>0$, if $\varepsilon$ and $\delta$ are chosen so that $6L(M_0+1)(M_1+1)(m+1)^3(1+4m\sqrt{m}/\delta)\varepsilon+LM_0(m+1)\delta\leq\eta$, Algorithm \ref{alg} will result in a control law that approximately satisfies \eqref{maxg} in Problem \ref{pro2}, with an error no larger than $\eta$. As can be seen from the proofs, the bound in Theorem \ref{bigthm} is very rough and can certainly be improved. However, an improvement on this bound is not in the focus of this paper.

Making use of the bound in Theorem \ref{bigthm} requires two elements: the controller's a priori knowledge of constants $M_0$, $M_1$, and $L$, and the ability to make $\varepsilon>0$ and $\delta>0$ as small as possible. It is difficult to escape the first requirement, although one could potentially think of a heuristic approach to learning $M_0$, $M_1$, and $L$ on the go. As for the second issue, in order to achieve arbitrarily good performance bounds on Algorithm \ref{alg}, the control actuators must allow for an arbitrarily good time resolution (i.e., shortest amount of time $\varepsilon$ that a control can be applied for) and actuation resolution (i.e., smallest amount of control $\delta$ that can be applied). While minimal $\varepsilon$ and $\delta$ that can be applied will depend on the actuator type, improvement on both of these resolutions is a topic of significant research in the engineering community \cite{Ecketal14,Krejetal17}. 

Theorem \ref{bigthm} completes this section. Before proceeding to a presentation of simulation results and a discussion of some open questions, let us briefly discuss how Algorithm \ref{alg} responds to disturbances.

\section{Robustness to Disturbances}
\label{nois}

As mentioned in the introduction, a beneficial consequence of the fact that Algorithm \ref{alg} learns the system dynamics during the system run is that it is able to easily account for the presence of disturbances. Simply put, with a trivial modification to the algorithm, the disturbance can be learned the same way as the ``intended'' underlying dynamics. In fact, the algorithm will not even know which part of the dynamics was created by the disturbance. Let us formalize the above discussion.

Consider a disturbance function $N$ which includes both time-varying and state-varying elements, i.e., $N:[0,+\infty)\times\cX\to\RR^n$. We assume that $N$ is a Lipschitz continuous function with $\|N(x)\|\leq M_0^*$ and $\|N(x)-N(y)\|\leq M_1^*\|x-y\|$ for all $x,y\in\RR^n$. After including this disturbance, dynamics \eqref{thesys} are now replaced by 
\begin{equation}
\label{thesysnoi}
\dot{x}=f(x)+\sum_{i=1}^m g_i(x)u_i+N(t,x)\textrm{.}
\end{equation}
We assume that dynamics given by \eqref{thesysnoi} still evolve on $\cX$. Because of the time-varying element of $N$, \eqref{thesysnoi} does not automatically fall within the class of systems in \eqref{thesys}. However, by appending an additional variable $x_{n+1}$ given by the dynamics $dx_{n+1}/dt=1$ and $x_{n+1}(0)=0$, and with an obvious slight abuse of notation, we obtain dynamics
\begin{equation}
\label{thesysnoi2}
\dot{x}=\begin{bmatrix} 
f(x)+N(x) \\
1
\end{bmatrix}+\sum_{i=1}^m\begin{bmatrix} 
g_i(x) \\
0
\end{bmatrix}u_i\textrm{.}
\end{equation}
System \eqref{thesysnoi2} falls into the control-affine category \eqref{thesys}, and we can thus directly apply Algorithm \ref{alg}. The only changes are in the bounds $M_0$, $M_1$: instead of $\|f\|,\|g_i\|\leq M_0$, it now holds that $$\left\|\begin{bmatrix} 
f+N \\
1
\end{bmatrix}\right\|,\left\|\begin{bmatrix} 
g_i \\
0
\end{bmatrix}\right\|\leq\max(M_0+M_0^*,1)\textrm{,}$$
and, analogously, the Lipschitz constants of the new functions in \eqref{thesysnoi2} are bounded by $M_1+M_1^*$.
Thus, if we possess a priori bounds on $M_0$, $M_1$, $M_0^*$, and $M_1^*$, $\varepsilon$ and $\delta$ in Algorithm \ref{alg} will need to be made smaller in order to guarantee the same degree of suboptimality as in the case without disturbances. With that small caveat, Algorithm \ref{alg} performs the same in the presence of disturbances as it does without it. In addition, we note that the same above explanation proves the correctness of Algorithm \ref{alg} in the generalized case of non-autonomous control-affine systems
$$\dot{x}(t)=f(t,x)+\sum_{i=1}^mg_i(t,x)u_i\textrm{,}$$
if maps $f$ and $g_i$ are Lipschitz continuous both with respect to time and position.

\section{Simulation Results}
\label{simul}

\subsection{Damaged Aircraft Example}
\label{damair}

The simulation work presented in this section is a version of the running example of this paper: a damaged aircraft that is attempting to avoid a crash. We base our model on the linearized dynamics of an undamaged Boeing 747 in landing configuration at sea level \cite{Bry94,CheChe01}. In order to model aircraft damage and introduce nonlinearity into the model, we modify the standard dynamics into the following system:

\begin{equation}
\label{dam}
\begin{split}
\dot{x}_1= & -0.021x_1+0.122x_2-0.322x_3+0.01u_1+u_2 \\
\dot{x}_2= & -0.209x_1-0.53x_2+2.21x_3-0.064u_1-0.044u_2 \\
\dot{x}_3= & \textrm{ } 0.017x_1+0.01\cos(x_1)x_1-0.164x_2+0.15\sin(x_1)x_2 \\
& -0.421x_3-0.378u_1+0.544u_2+0.5\sin(x_2)u_2 \\
\dot{x}_4= & \textrm{ } x_3 \\
\dot{x}_5= & -x_2+2.21x_4\textrm{,}
\end{split}
\end{equation}
where $x=(w_l,w_v,q,\theta,h)$ and $u=(\delta_e,\delta_t)$. Variable $w_l$ represents the deviation from the steady-state longitudinal speed of the aircraft, $w_v$ the downwards-pointing vertical speed, $q$ the pitch rate, $\theta$ the pitch angle and $h$ the altitude. Additionally, inputs $\delta_e$ and $\delta_t$ are the deviations from the elevator and thrust values used for level flight. All values are in feet, seconds and centiradians. The limits on control inputs are set to $u_1=\delta_e\in[-30,30]$ and $u_2=\delta_t\in[-1,1]$, roughly informed by the descriptions in \cite{Ganetal02,Smi75}.


System \eqref{dam} gives the equations of motions that the simulated aircraft will follow. The only difference between \eqref{dam} and the model of an undamaged aircraft from \cite{Bry94,CheChe01} is in the presence of nonlinear factors, which do not appear in \cite{Bry94,CheChe01}. While we recognize that \eqref{dam} is not necessarily realistic for any particular type of aircraft damage, and the introduction of trigonometric terms was partly motivated by the desire to showcase how our control strategy handles nonlinearities, dynamics \eqref{dam} were generally chosen in order to emulate damage to the aircraft's horizontal stabilizer, which results in the pitch rate behaving more erratically. We also remind the reader that our controller design does not start with any a priori knowledge of the system dynamics, and our controller will thus not be making use of any similarities between the undamaged dynamics from \cite{Bry94,CheChe01} and actual dynamics \eqref{dam}.

As mentioned throughout the previous sections, the setting investigated in this example is that the aircraft suffered damaged while in flight, and its main objective is to remain in the air. We now make this scenario more precise: the initial state (i.e., the state at which the abrupt in change in system dynamics occured) is $(0,0,0,0,100)$, that is, the aircraft was in level flight at an altitude of $100$ feet. The primary goal of the aircraft is:
\begin{enumerate}
	\item[(i)] $x_5(t)>0$ for all $t\geq 0$.
\end{enumerate}
We also impose further specifications:
\begin{enumerate}
	\item[(ii.a)] $x_4(t)\in[-40,40]$ for all $t\geq 0$  (high pitch angles are unsafe for the aircraft; the bound is informed by \cite{MulSte93}),
	\item[(ii.b)] $\dot{x}_5(t)\in[-30,30]$ for all $t\geq 0$ (high climb rates are unsafe for the aircraft; the bound is informed by \cite{MulSte93}), and
	\item[(iii)] $x_5(t)\in[900,1100]$ for all $t\geq T$, with $T>0$ as small as possible (because of the damage and the fact that the aircraft is in landing configuration, it may not be either possible nor smart to move to a high altitude, but it is also not desirable to stay very close to the ground).
\end{enumerate}

While we want to ensure that all above restrictions are satisfied, it is possible that at certain times, the above specifications will clash with each other. We impose the following priority, ranked in descending order: (i), (ii.a) and (ii.b), (iii).

We now design a goodness function corresponding to the above specifications. As we described in Section \ref{measgoo}, there is no formally correct way for developing a goodness function. Its design depends primarily on the geometric intuition of the problem. Hence, we first give a rough idea behind the goodness function that we wish to develop:

\vskip 10pt
	\begin{tabular}{p{0.38\columnwidth}p{0.62\columnwidth}}
		if $h=x_5<100$: & $G$ is such that $x_5$ quickly increases, \\
        else if $|x_4|>40$ or $|\dot{x}_5|>30$: & $G$ is such that $|x_4|$, $|\dot{x}_5|$ or both (as needed) quickly decrease, \\
        else if $x_5\notin[900,1100]$: & $G$ is such that $x_5$ quickly approaches $1000$, \\
        else: & $G$ is such that $|x_4|$ and $|\dot{x}_5|$ remain small.
	\end{tabular}
\vskip 10pt

The above informal design remains to be converted into a goodness function $G$. In particular, based on the above motivation, we need to define six functions that will result in following motions: $x_5$ quickly increasing, $x_5$ quickly decreasing, $|x_4|$ quickly decreasing, $|\dot{x}_5|$ quickly decreasing, $|x_4|$ remaining around $0$, $|\dot{x}_5|$ remaining around $0$.

Defining the above functions would not be difficult if we had full knowledge of the system dynamics \eqref{dam}. However, since the setting of this example deals with an unknown abrupt change in system dynamics, these dynamics are ``hidden'' from us when designing the goodness function. Instead, the only a priori knowledge about system dynamics comes from basic physical laws and the physical meaning of $x_1,\ldots,x_5$. In particular, we know the following:
\begin{enumerate}[1$^\circ$]
\item Increasing $u_1=\delta_e$ will decrease $x_2=w_v$ and $x_3=q$ (by the design of the elevator);
\item increasing $u_2=\delta_t$ will increase $x_1=w_l$ and $x_3=q$, and decrease $x_2=w_v$ (by the function of the aircraft engine controls);
\item $\dot{x}_4=x_3$ (by definition of $x_3$ and $x_4$);
\item disregarding large deviations in the longitudinal speed, $\dot{x}_5$ depends on $x_2$ and $x_4$: increasing $x_4$ will increase $\dot{x}_5$, and increasing $x_2$ will decrease $\dot{x}_5$ (by definitions of $x_2$, $x_4$, and $x_5$, and the fact that an aircraft will always have a positive longitudinal speed); and
\item $u_1$ and $u_2$ do not directly increase or decrease $x_4$ and $x_5$, but instead act on them through $x_2$ and $x_3$.
\end{enumerate}

We once again emphasize that our strategy does not require any specific knowledge on the system dynamics \eqref{dam}: facts $1^\circ$-$5^\circ$ all follow from physical laws, definitions of the variables, and basic understanding of how aircraft control inputs work.

Let us now go back to designing functions that result in the required six motions. From $5^\circ$, we know that we cannot directly increase $x_5$ through control inputs. Thus, from $4^\circ$, in order to increase $x_5$ (and $\dot{x}_5$) we know that we want to increase $x_4$ and decrease $x_2$. Since by $5^\circ$ and $3^\circ$ we cannot directly influence $x_4$ through our inputs, but only $x_3$, we want to find a control input that results in an increase of $x_3$ and decrease of $x_2$. We do not know exactly which of these two methods we should prioritize, so it makes sense to find an input that maximizes the value $-\dot{x}_2+\dot{x}_3$. Analogously, in order to decrease $\dot{x}_5$ and $x_5$, we want to maximize $\dot{x}_2-\dot{x}_3$.

What remains is to find methods for decreasing $|x_4|$ and keeping $|x_4|$ and $|\dot{x}_5|$ around $0$. Let us first deal with the tasks pertaining to $x_4$. In order to decrease $|x_4|$, we maximize the increase of $x_3=\dot{x}_4$ in the direction opposite from $x_4$, thus providing negative acceleration which will ultimately result in $x_4$ reducing to $0$. In order for $|x_4|$ to remain around $0$, we simply use the above method to decrease $|x_4|$ whenever it grows beyond some small number (e.g., $|x_4|>2$).

By $4^\circ$, keeping $|\dot{x}_5|$ around $0$ can be accomplished by reducing $|x_4|$, for which we already designed a strategy, and reducing $|x_2|$. We use a similar method as above to reduce $|x_2|$. We first bring the values of $x_2$ to around $0$ by negative acceleration, and then re-apply this method whenever $|x_2|$ grows beyond $2$.

Using the above methods, our rough idea of a goodness function $G$ is now formalized as follows:
    \vskip 10pt
	\begin{tabular}{p{0.38\columnwidth}p{0.62\columnwidth}}
		if $x_5(t)<100$: & $G(x|_{[0,t]},v)=v_3-v_2$, \\
        else if $|x_4(t)|>40$ or if $|\dot{x}_5(t)|>30$ \newline or if $x_5(t)\in[900,1100]$: & $G(x|_{[0,t]},v)=m_1+m_2$, where $m_1=-v_2\cdot\sign (x_2(t))$ \newline if $|x_2(t)|>2$ and $m_1=0$ otherwise, and \newline $m_2=
        -v_3\cdot\sign (x_4(t))$ if $|x_4(t)|>2$, and $m_2=0$  otherwise, \\
        else: & $G(x|_{[0,t]},v)=(v_3-v_2)\cdot\sign (1000-x_5(t))$.
	\end{tabular}
    \vskip 10pt

Once again, we emphasize that this goodness function is not the only possible one. It is certainly possible to come to an equally reasonable, but different function through different logic. One possible goodness function is described in Example \ref{exrap}, which proposes a continuous goodness function for the reach-avoid problem. However, that function  depends on the ``mixing'' parameter that prioritizes the two objectives, and such a parameter would be difficult to design. On the other hand, our proposed goodness function is simpler to design, but is not continuous. Hence, we are unable to directly use the results of Section \ref{resb} to determine good choices of cycle length $\varepsilon$ and wiggle size $\delta$. Additionally, for computational reasons, instead of applying Algorithm \ref{alg} directly as written, we employ it with a minor modification as in Remark \ref{twoalgs}, where learning and control periods inside one learn-control cycle are decoupled. We chose the length of the learning period to be $\varepsilon'=10^{-4}$, and the length of the entire learn-control cycle to be $10^{-1}$.

The simulation results are presented in Figure \ref{simfig1}. From Figure \ref{simfig1}, we note that the algorithm, i.e., the myopic control strategy, performs exactly as desired. Within $\sim 75$ seconds, the aircraft increases its height to roughly $1000$ feet. After that, it takes another $\sim 75$ seconds to safely position its altitude within $[900,1100]$. From then on, the system stabilizes to an altitude of $\sim 960$ feet. Additionally, the aircraft's pitch angle $\theta$ and climb rate $\dot{h}$ almost always remain in $[-40,40]$ and $[-30,30]$, respectively. While the peaks that result in the trajectory leaving the desired bounds could be avoided by using a different goodness function in the climb period, we did not make such changes in order to emphasize that the control design performs exactly as intended: as soon as the climb rate or angle leave the desired bounds, the controller takes immediate action to return them back into the desired region.

\begin{figure}[ht!]
	\centering
	\includegraphics[height=110pt, width=220pt]{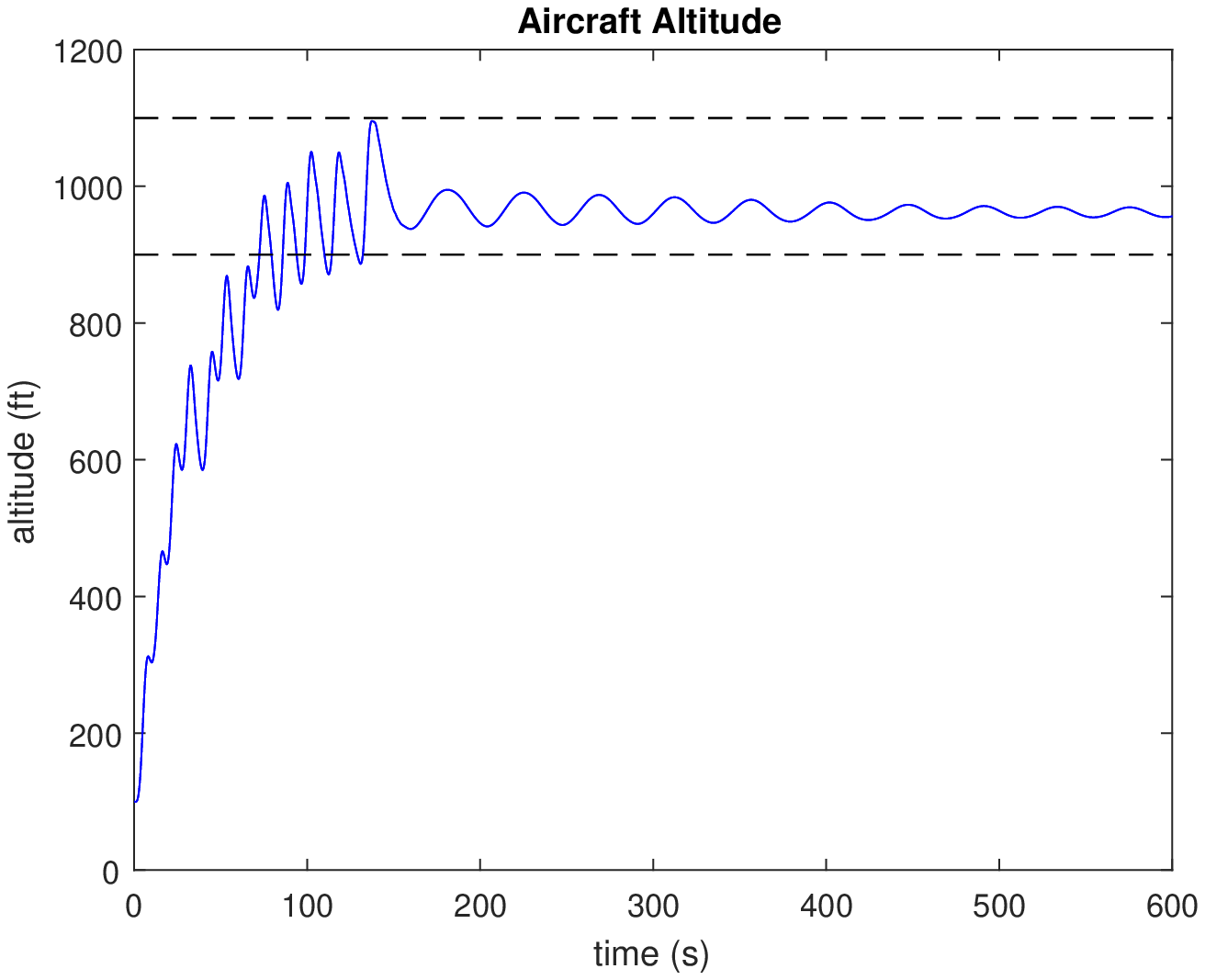}
    \includegraphics[height=110pt, width=220pt]{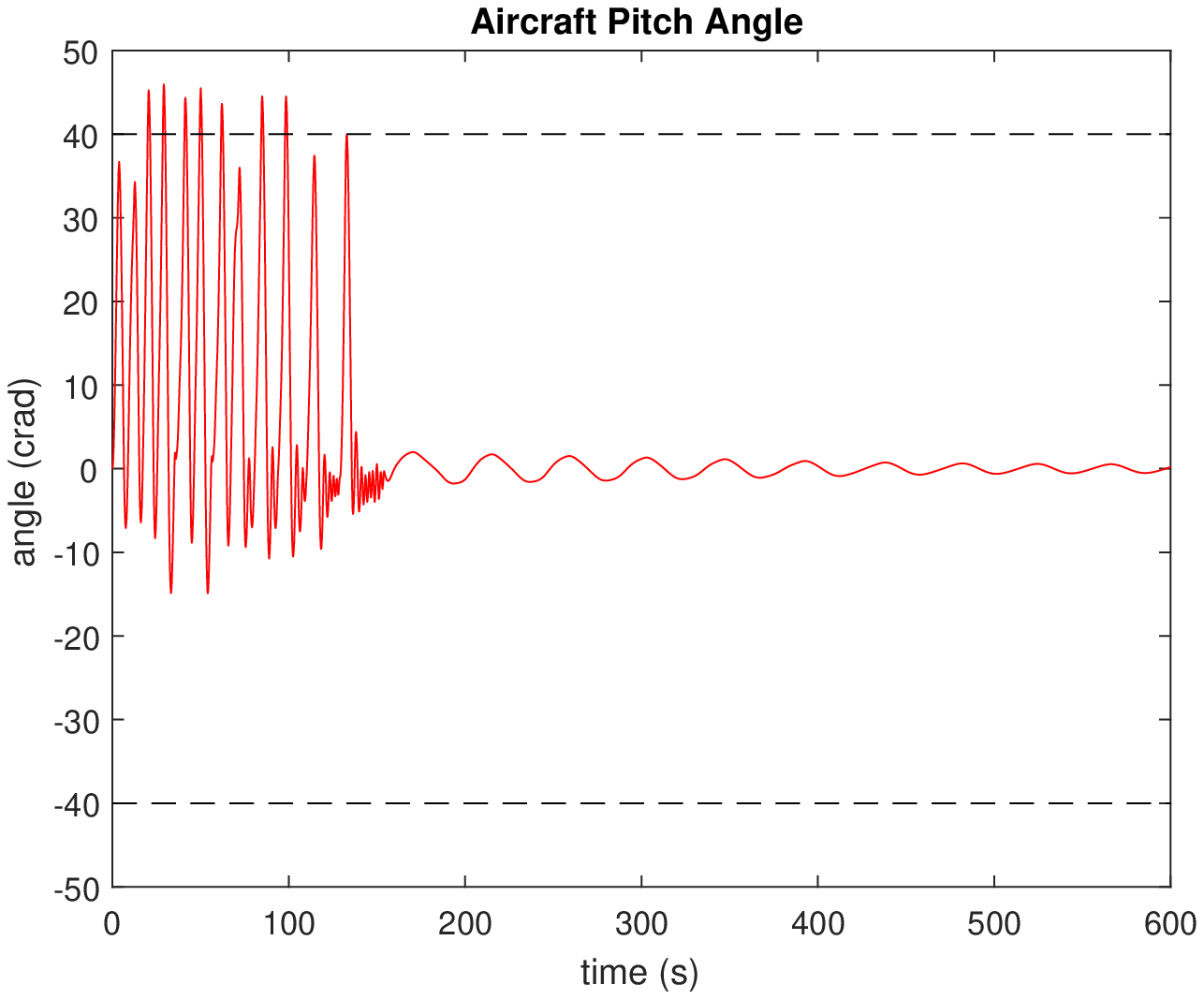}
    \includegraphics[height=110pt, width=220pt]{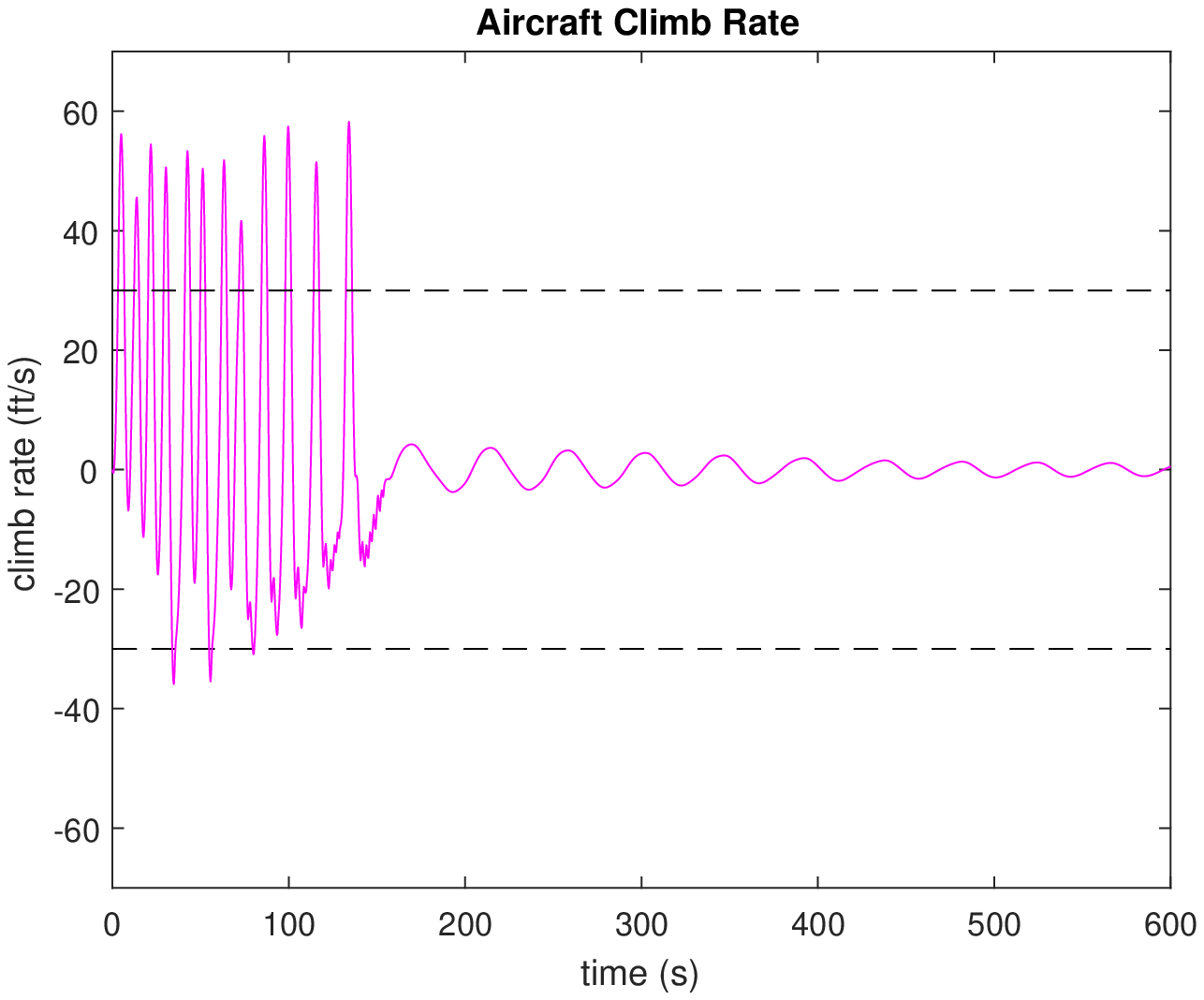}
\caption{Graphs of the simulated aircraft trajectory from the setting of Section \ref{damair}. The aircraft altitude $h$ is given in blue, the pitch angle $\theta$ is in red, and the climb rate $\dot{h}$ is in pink. The desired bounds for these values are denoted by dashed lines on the corresponding graph.}
	\label{simfig1}
\end{figure}

We note that the aircraft performs better than intended. In particular, it asymptotically stabilizes to a level flight at a particular altitude, which was neither required by the problem specifications, nor explicitly enforced by the goodness function. However, it makes natural sense given the system dynamics: when the aircraft is in an almost perfect position (i.e., $|x_2|\leq 2$, $|x_4|\leq 2$, $x_5\in[900,1100]$), the goodness function is $0$. In that case, the system does not have any preference for the given control. It thus always applies $u=0$. Then, the system dynamics are given as $\dot{x}=f(x)$, which, in the case of dynamics \eqref{dam}, results in convergence to a local equilibrium. By a smart choice of a goodness function $G$, the system could be made to asymptotically converge to any altitude. However, for this behavior to be a priori guaranteed, we would need to have significant prior knowledge of dynamics \eqref{dam}.

\subsection{Van der Pol Oscillator}
\label{damosc}

We consider the following control system:

\begin{equation}
\label{vanderpol}
\begin{pmatrix}
\dot{x}_1 \\
\dot{x}_2
\end{pmatrix}=\begin{pmatrix}
x_2 \\
-x_1 - 2(1-x_1^2)x_2
\end{pmatrix}+\begin{pmatrix}
0 \\
1
\end{pmatrix}
u\textrm{,}
\end{equation}
with the initial value $x(0)=(1,-2)$ and control input bounds $u\in[-2,2]$.

Equation \eqref{vanderpol} models a Van der Pol oscillator with control input. The Van der Pol oscillator is a standard example of a nonlinear control system (see, e.g., \cite{Vid02} for a longer discussion) and is widely used in applications, including human locomotion \cite{Dutetal03}, robotics \cite{HelEsp08, VesDem05}, and laser dynamics \cite{WirRan02}.

The control objective in this example is to bring the value of $x_2$ around $0$, and control it so that it remains as close as possible to that value. In a sense, this is a ``poor man's stabilization''. As myopic control is by its nature not well-suited for asymptotic results, we are not explicitly requiring $x_2(t)\to 0$ as $t\to\infty$.

The design of goodness function $G$ proceeds similarly to Section \ref{damair}. We are not using any particular information on system \eqref{vanderpol}, but we are assuming that it is known that $x_2$ can be directly influenced by the choice of $u$. Then, by the same logic as in Example \ref{expro1} and Section \ref{damair}, in order to minimize $|x_2|$, it makes sense for $G$ to measure how fast $x_2$ approaches $0$. As in Section \ref{damair}, this results in a goodness function $G(x|_{[0,t]},v)=-v_2\sign(x_2(t))$. However, as all our results in Section \ref{resb} require $G$ to be continuous, in this example we mollify this function and take 
\begin{equation}
\label{ourg}
G(x|_{[0,t]},v)=-v_2\arctan\left(x_2(t)\right)\textrm{.}
\end{equation}
Since for every $x_2\in\RR\backslash\{0\}$ the value of $v_2\arctan(x_2)$ is a positive scalar multiple of $v_2\sign(x_2)$, this mollification does not change $\argmax G$. The control input which maximizes $G$ will still be given by $u=2$ if $x_2<0$ and $u=-2$ if $x_2>0$. However, the penalty for choosing a slightly suboptimal control instead of the optimal one will be lower when $x_2$ is closer to $0$. This makes natural sense: the closer the value of $x_2$ is to $0$, the more satisfied with the current state we are, and we can afford to be more permissive with the direction in which the system is moving.

Before presenting the simulation results, let us briefly explore the performance bounds provided by Section \ref{resb}. Since the goodness function $G$ from \eqref{ourg} is Lipschitz-continuous, we can apply the results from Section \ref{resb}. In order to do that, we need bounds on the norm of the system dynamics, as well as Lipschitz constants of system dynamics and goodness function. While the system is technically evolving on all of $\RR^2$, we will assume that we are only interested in the dynamics that take place in $\cX=[-5,5]\times[-5,5]$. Then, it is not difficult to show that $M_0\leq 250$, $M_1\leq 99$, $L\leq 29$. By plugging these values into Theorem \ref{bigthm}, the myopic suboptimality of the control law generated by Algorithm \ref{alg} is bounded by $B_{\varepsilon\delta} \approx 3.5\cdot 10^7(1+4/\delta)\varepsilon+1.5\cdot 10^4\delta$.
We note that, for a given direction vector $v=\dot{x}(t)$, by \eqref{ourg}, $x_2$ will be moving towards $0$ if $G(x|_{[0,t]},v)>0$. Thus, as long as 
$\max_u G(x|_{[0,t]},v_{x(t)}(u))>B_{\varepsilon\delta}$ the system will be moving in the right direction. 

Since the goodness function $G$ as defined in \eqref{ourg} only depends on $x(t)\in\RR^2$ and $v=v(u)$, inequality $\max_u G(x|_{[0,t]},v_{x(t)}(u))>B_{\varepsilon\delta}$ defines a region $\cG$ of $\RR^2$ in which the system is guaranteed to be progressing in a good direction (i.e., $x_2$ is approaching $0$). However, simple calculations show that in order to guarantee that the trajectory starting from $x_0=(1,-2)$ will even start moving in the right direction (after the first $\epsilon$ period of time), it is necessary to have $\varepsilon<10^{-7}$ and $\delta<10^{-4}$. However, these bounds are derived from Section \ref{resb} and are very conservative. In fact, simulations show these value can be larger by several orders of magnitude without impacting the performance of the system. Figure \ref{simfig2} shows the simulation results with $\varepsilon=10^{-4}$ and $\delta=10^{-3}$.

\begin{figure}[ht!]
	\centering
	\includegraphics[height=110pt, width=220pt]{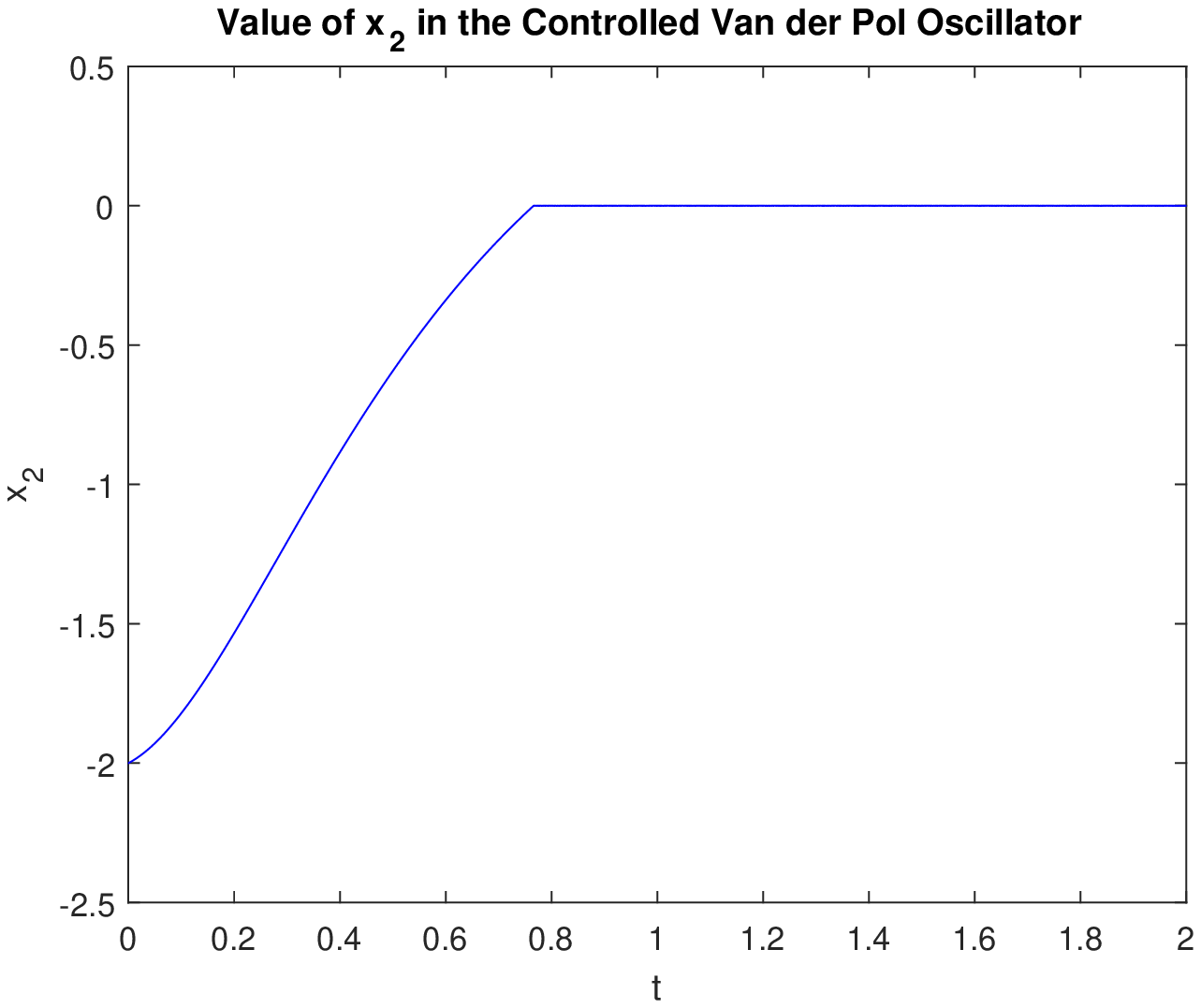}
    \includegraphics[height=110pt, width=220pt]{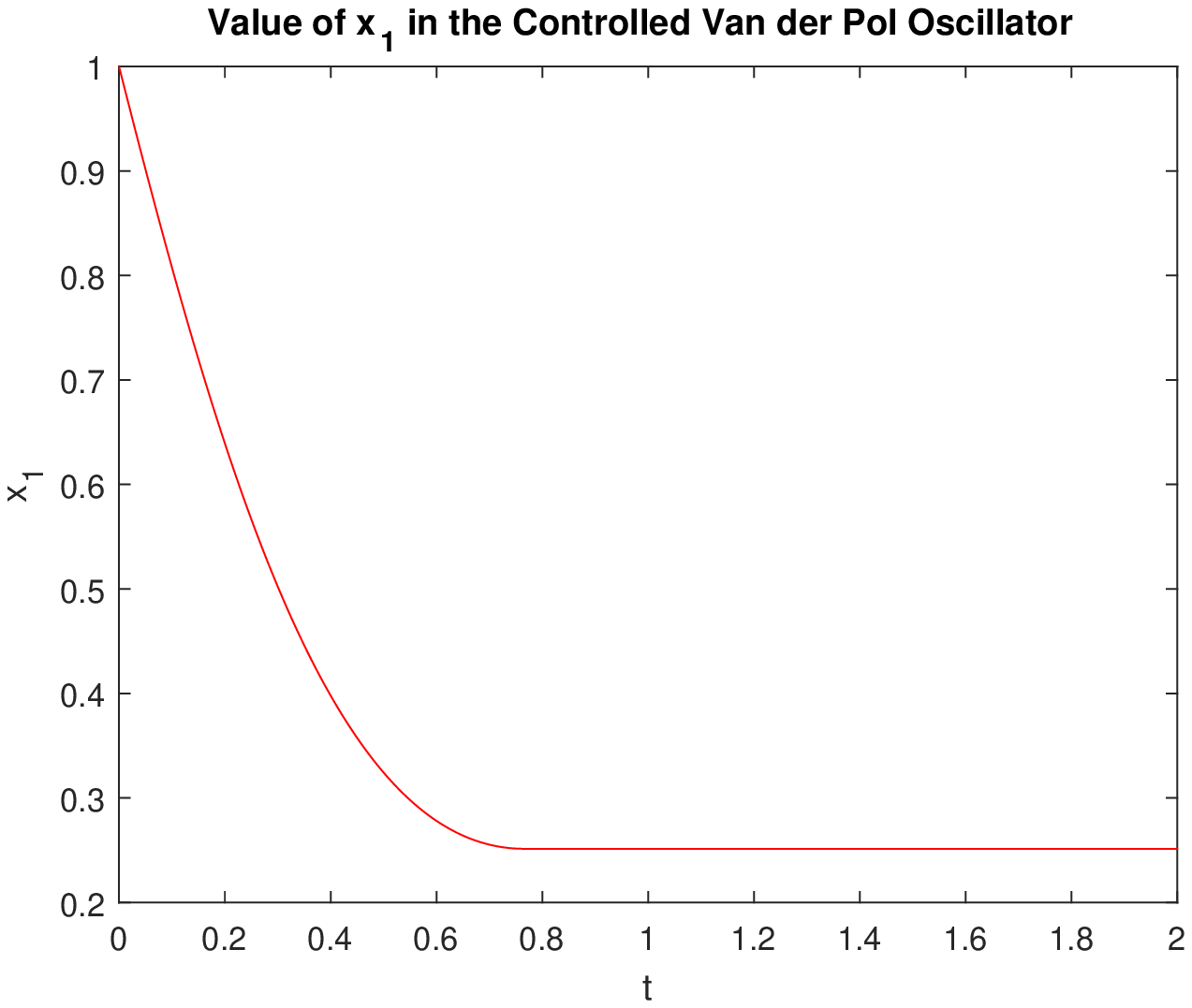}
    \includegraphics[height=110pt, width=220pt]{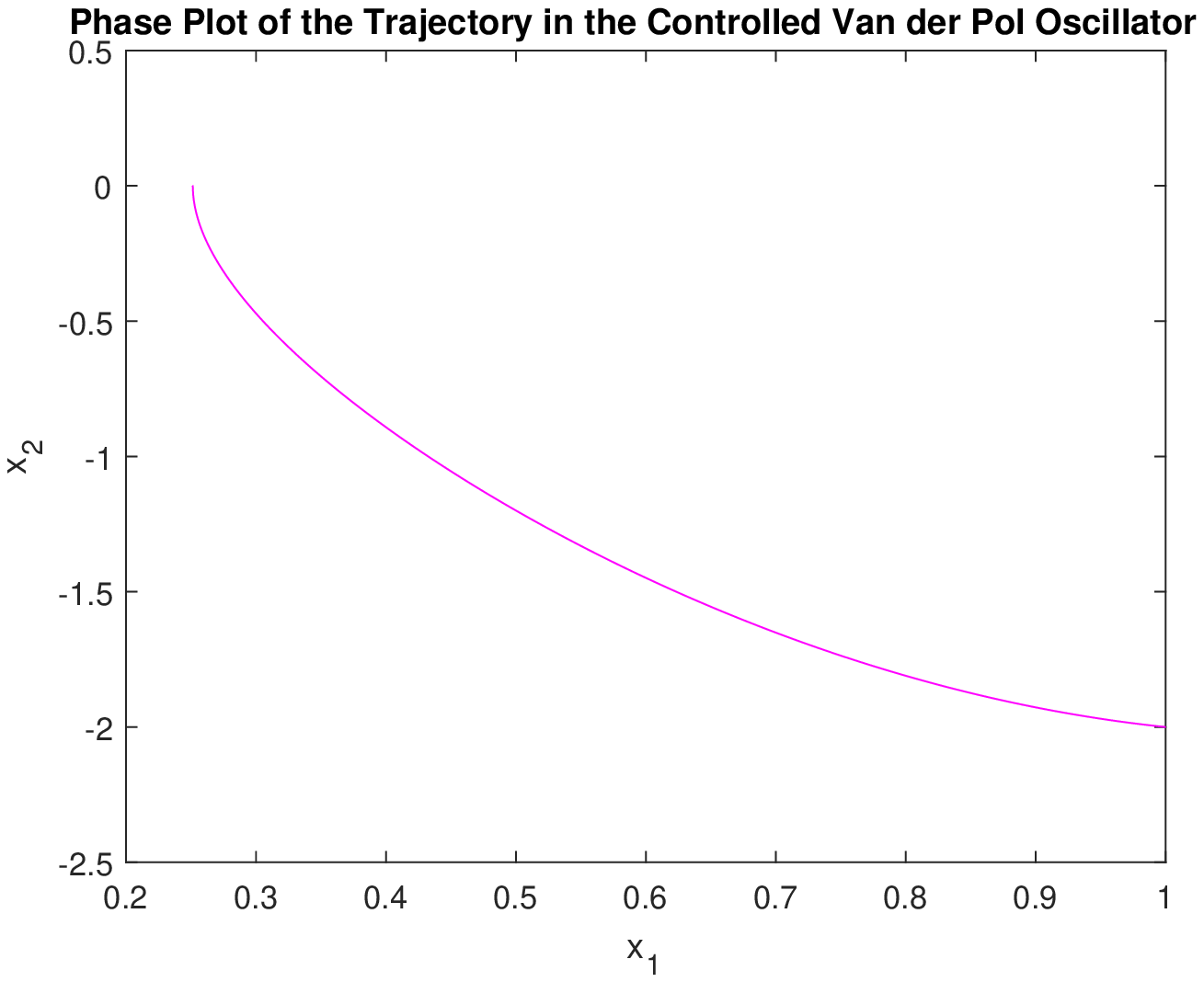}
\caption{Graphs of the simulated trajectory from the setting of Section \ref{damosc}. The blue line represents the value of $x_2$ over time, the red line represents the value of $x_1$ over time, and the pink line is the phase plot of the trajectory.}
	\label{simfig2}
\end{figure}

As shown in Figure \ref{simfig2}, the control strategy performs exactly as desired. The trajectory moves in the right direction from the beginning and reaches $x_2=0$. After reaching it, it remains in $|x_2|<5\cdot 10^{-3}$, constantly switching the controller in order to remain close to the desired point. We also note that $x_1$ also remains within $10^{-3}$ of a particular value. This was not guaranteed nor intended by the design of the goodness function, but is a natural consequence of the fact that $x_2$ remains around $0$ and keeps switching signs. Thus, $
\dot{x}_1$ also remains around $0$, and every increase in $x_1$ is followed by a decrease of roughly the same size.

We once again emphasize that the wiggle size $\delta$ and cycle time $\epsilon$ were several orders of magnitude larger than required to guarantee good behaviour by the results of Section \ref{resb}, and yet, the system works as intended. While such a result is partly a natural consequence of the liberal bounds on the norms and Lipschitz constants of the system in this example, it additionally shows that the bounds given in Section \ref{resb} provide much more conservative guarantees than what is obtained in practice.

\section{Future Work}
\label{secfut}

Before providing a concluding summary of the paper, let us list and briefly discuss some remaining open issues and possible future directions of work based on the contributions of this paper. We generally proceed in the order that the issues appear in the paper.

\begin{itemize}
	\item{The paper makes an assumption on the control-affine structure \eqref{thesys} of the control system. While control-affine systems are a wide and physically interesting class, it is of significant interest to try to generalize the approach of this paper to general nonlinear systems. A naive approach to doing so would be to separate the learning and the control phase of Algorithm \ref{alg}, along the same lines as in Remark \ref{twoalgs}. Then, in the learning phase, the algorithm would perform an exhaustive search over the space $\cU$, choose the best control, and apply it for a certain time period in the control phase. After that, the process would be repeated. However, this method would not provide good guarantees for algorithm performance: the number of tested controls in the learning phase, which contains no performance guarantees, would be enormous, and the learning would hence take so long that it could lead to significant performance degradation.
	
	A useful, and seemingly not difficult, extension would be to adapt Algorithm \ref{alg} to control systems which, while not control-affine, still have some structure which allows the algorithm to learn control dynamics around a point from only a small number of tested controls. The class of such systems, for example, includes systems polynomial in control (discussed, e.g., in \cite{MouPer05}).
	}
	\item{An interesting extension of this work would be to try to account for piecewise-continuity of $f$, $g_i$. This setting corresponds to \eqref{thesys} being a hybrid/switched system. Currently, the presented algorithm has no way to detect whether, or make use of the fact that the system made a jump into a different mode of dynamics. The addition of such an ability would potentially also enable identifying which mode of dynamics is the best for a particular control objective.
	}
	\item {
	While the case of partial observations, i.e., partial knowledge of the system trajectory, is of significant interest for physical systems \cite{FleRis75,Mar90}, it is not explored in this paper. A possible approach to that scenario is to use test controls $u^*+\Delta u^0,\ldots,u^*+\Delta u^m$ in such a way that, while each control only reveals partial information of the dynamics around a point, the information gathered from all the controls combined allows the controller to reconstruct full local dynamics.
	}
	\item {
	The primary question still left partly unexplored by this paper concerns the choice of a good function $G$. We discussed some details on this choice in Section \ref{measgoo}. However, that was merely a preliminary exploration. It would be useful to obtain results which provide a formal relationship between a solution to Problem \ref{pro2} (for a particular function $G$) and solutions to Problem \ref{rap}, Problem \ref{pro1}, and other non-myopic control problems. 
	
The addition of side knowledge into the decision process by choosing an appropriate function $G$ was briefly discussed in Section \ref{measgoo}, and information coming from physical laws was successfuly used in the design of a function $G$ in Section \ref{damair}. However, no formal work on the use of side information in goodness function design has been presented in this paper.
	
	On the side of lower-level considerations for the choice of function $G$, we note that in Algorithm \ref{alg}, we assume that $
\argmax_u G\big(\phi,\tilde{v}(u)\big)$, or at least one element of it, can be found, and that this operation can be done momentarily. However, the validity of this assumption depends on the structure of the function $G$: if, for instance, $G$ is affine in the second variable, as in Remark \ref{exrem}, the solution of the above optimization problem is trivial. If, however, $G$ is highly nonlinear, determining $\argmax_u G\big(\phi,\tilde{v}(u)\big)$ can become difficult. Thus, it might make sense to choose $G$ to be, e.g., convex, affine, or quadratic in some sense.
	
	Additionally, we note that the results of Section \ref{seccon} are all made under the assumption that $G$ has a Lipschitz constant. While this assumption often makes sense, Example \ref{exrap} introduces a goodness function which is not continuous in $\phi(t)$. If $G$ does not have a Lipschitz constant, the performance guarantees of Section \ref{seccon} would have to be reworked using some other measure of ``tameness'' of $G$ --- for instance, a possible approach may be to use locally bounded variations.
	}
\end{itemize}

\section{Conclusion}
\label{conc}

This paper proposes a method for control of an a priori unknown control-affine system in order to solve reach-avoid-type problems. Because of obvious limitations when controlling a system with unknown dynamics, real-time design of an optimal control law is impossible. Instead, we propose a notion of myopic control, where the optimal control law is one which, at a given time, seems to best satisfy the control specifications. We propose an algorithm that finds a near-optimal solution to the myopic control problem. The algorithm works by simultaneously learning the local system dynamics by applying a series of small controls and using those dynamics to find an optimal control input for a given instance of time, and can produce a control law arbitrarily close to the optimal one. Because the proposed algorithm does not use previously known system dynamics, but learns them on the way, it is highly robust to noise. We provide extensive discussion and simulation results, in particular, for a particular example of control of an aircraft that underwent a catastrophic failure, and, finally, describe future avenues of research based on the proposed algorithm.

\appendices
\section{Existence of a Myopically-Optimal Control Law}
\label{exim}

Let $\tilde\cS:\cF\to 2^{\cU}$ be a set-valued map defined by $\tilde\cS\left(\phi|_{[0,t]}\right)=\argmax_{u\in\cU} G\left(\phi|_{[0,t]},v_{\phi(t)}(u)\right)$. Then, Problem \ref{pro2} has a solution if and only if the following problem has a solution: 
\begin{equation}
\label{seteq2}
\begin{split}
& \dot{x}(t)\in \left\{f\left(x(t)\right)+\sum_{i=1}^m g_i\left(x(t)\right)u_i~|~u\in\tilde{\cS}\left(x|_{[0,t]}\right)\right\}\textrm{,} \\
& x(0)=x_0\textrm{.}
\end{split}
\end{equation}
If we define a set-valued map $S:\cF\to 2^{\RR^n}$ as the right-hand side of the differential inclusion in \eqref{seteq2}, the existence of solution to Problem \ref{pro2} can thus be interpreted in terms of the existence of a solution to \begin{equation}
\label{seteq}
\begin{split}
& \dot{x}(t)\in S(x|_{[0,t]})\textrm{,} \\
& x(0)=x_0\textrm{,}
\end{split}
\end{equation}
that is, a delay differential inclusion in the sense of \cite{Bou10}. 

Differential inclusion \eqref{seteq} differs from the setting covered by the bulk of research in delay differential inclusions (e.g., \cite{Car97, HenOua09, Liuetal16}), as the delay (i.e., the length of time interval on which the right side depends) in \eqref{seteq} is not fixed. Nonetheless, some sufficient conditions for the existence of a solution to \eqref{seteq} have been previously identified. In particular, Theorem \ref{thbou} presents an adaptation of the central result of \cite{Ana75}. As the work in \cite{Ana75} only deals with $\cX=\RR^n$, we make that assumption in the remainder of this section. In the context of Theorem \ref{thbou}, we define the norm on $C([0,t],\RR^n)$ by $\|\phi\|=\max_{\tau\in[0,t]}\|\phi(t)\|$. We additionally remind the reader of the following definitions.

\begin{definition}
Let $\cA$, $\cB$ be topological spaces, and let $f:\cA\to 2^\cB$. Map $f$ is {\em upper hemicontinuous} at a point $a\in \cA$ if for any open set $O_B\supseteq f(a)$ in $\cB$, there exists an open set $O_A\ni a$ in $A$ such that $f(a')\subseteq O_b$ for all $a'\in O_A$. Map $f$ is {\em upper hemicontinuous} if it is upper hemicontinuous at all points $a\in A$.
\end{definition}

\begin{definition}
Let $\cB$ be a topological space with a Lebesgue measure, and $I\subseteq\RR$ an interval. A map $f:I\to 2^\cB$ is {\em Lebesgue measurable} if for every closed set $C\subseteq\cB$, the set $\{t\in I~|~f(t)\cap C\neq\emptyset\}\subseteq I$ is Lebesgue measurable.
\end{definition}

We are now ready to present a sufficient condition for the existence of a solution to Problem \ref{pro2}.

\begin{theorem}
\label{thbou}
Let $\gamma>0$, and let $\cX=\RR^n$. Assume that all of the following hold:
\begin{enumerate}[(i)]
\item $S(\phi|_{[0,t]})\subseteq\RR^n$ is a convex set for all $t\in[0,\gamma]$ and all continuous maps $\phi|_{[0,t]}:[0,t]\to\cX$.
\item For any fixed $t\in[0,\gamma]$, the restriction of the map $S$ to $C([0,t],\cX)$ is upper hemicontinuous.
\item For any fixed $\phi\in C([0,\gamma],\cX)$, the map $S_\phi:[0,\gamma]\to 2^{\RR^n}$ defined by $S_\phi(t)=S(\phi_{[0,t]})$ is Lebesgue-measurable.
\item There exists $k>0$ such that, for every absolutely continuous function $\phi\in C([0,\gamma],\cX)$, every $t\in[0,\gamma]$ such that $d\phi(t)/dt$ exists, and every $y\in S(\phi|_{[0,t]})$, it holds that $y\cdot d\phi(t)/dt\leq k(1+\|\phi|_{[0,t]}\|^2)$.
\end{enumerate}
Then, \eqref{seteq} has a solution on the interval $[0,\gamma]$.
\end{theorem}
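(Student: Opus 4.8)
The plan is to deduce the statement from the existence theorem of \cite{Ana75} for delay differential inclusions with non-fixed delay, by verifying that hypotheses (i)--(iv) supply precisely the ingredients that theorem requires. The overall scheme is the classical one for differential inclusions: produce a family of approximate solutions confined to a fixed bounded region, extract a uniformly convergent subsequence by Arzel\`a--Ascoli, and pass to the limit using the convexity and upper hemicontinuity of $S$. The ``delay'' here is the entire history segment $x|_{[0,t]}$, whose length grows with $t$; this is what distinguishes \eqref{seteq} from the fixed-delay inclusions treated in \cite{Car97, HenOua09, Liuetal16}, and is the feature that forces us to appeal to \cite{Ana75} rather than to more standard results.

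First I would establish the a priori bound that is the workhorse of the whole argument. For any candidate (approximate) solution $\phi$ and any $t$ at which $d\phi(t)/dt$ exists, the inclusion gives $d\phi(t)/dt\in S(\phi|_{[0,t]})$, so taking $y=d\phi(t)/dt$ in hypothesis (iv) yields $\|d\phi(t)/dt\|^2\leq k(1+\|\phi|_{[0,t]}\|^2)$, whence $\|d\phi(t)/dt\|\leq\sqrt{k}\,(1+\|\phi|_{[0,t]}\|)$. Writing $m(t)=\|\phi|_{[0,t]}\|=\max_{\tau\in[0,t]}\|\phi(\tau)\|$, which is nondecreasing, and integrating, I obtain
\begin{equation*}
m(t)\leq\|x_0\|+\sqrt{k}\,t+\sqrt{k}\int_0^t m(s)\,ds\textrm{,}
\end{equation*}
so Gronwall's inequality bounds $m$ uniformly on $[0,\gamma]$. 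This confines every solution to a fixed ball of radius $R$ in $\RR^n$, and simultaneously bounds the derivatives, yielding uniform Lipschitz continuity and hence equicontinuity of the approximate solutions.

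With the a priori bound in hand, I would carry out the compactness-and-closure step. Restricting attention to trajectories valued in that ball, the measurability hypothesis (iii) guarantees (via a measurable-selection argument) that Euler-type or mollified approximate solutions are well defined; the uniform bounds and equicontinuity let Arzel\`a--Ascoli extract a subsequence converging uniformly to some absolutely continuous $\phi$ with $\phi(0)=x_0$; and the convexity (i) together with the upper hemicontinuity (ii) of $S$ drive the standard convergence theorem (a Mazur-lemma/Filippov-type closure argument, converting weak-$*$ convergence of the uniformly bounded derivatives into the pointwise inclusion) that certifies $d\phi(t)/dt\in S(\phi|_{[0,t]})$ almost everywhere. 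This is exactly the structure of the argument in \cite{Ana75}, so the remaining work is largely a matter of matching notation and checking that the growing-history map $t\mapsto S(\phi|_{[0,t]})$ meets the precise regularity demanded there once $\cX=\RR^n$.

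The main obstacle I anticipate is the closure step under the non-fixed delay: passing $d\phi_n(t)/dt\in S(\phi_n|_{[0,t]})$ to the limit requires continuity of $S$ along the converging trajectories in the history variable, and because the delay is unbounded one must confirm that uniform convergence of $\phi_n$ on all of $[0,\gamma]$ genuinely activates upper hemicontinuity of $S$ at each $t$ --- that is, that the sup-norm topology induced by $\|\phi\|=\max_{\tau}\|\phi(\tau)\|$ is exactly the one with respect to which (ii) is posited. Reconciling this compatibility, and verifying that the growth bound (iv), which is phrased through $d\phi(t)/dt$, reproduces the a priori estimate required by \cite{Ana75}, is where the real care lies; the Gronwall estimate and the Arzel\`a--Ascoli extraction are routine by comparison.
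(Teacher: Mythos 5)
Your proposal takes essentially the same route as the paper: the paper offers no proof of Theorem \ref{thbou} at all, stating only that it ``follows immediately from the main result in \cite{Ana75}'' (with the remark that one of the original conditions there is automatically discharged because $\cU$ is bounded), and your argument likewise reduces the statement to \cite{Ana75} while additionally sketching, correctly, the standard internal machinery (Gronwall a priori bound from (iv), Arzel\`a--Ascoli, and the convexity/upper-hemicontinuity closure step) that underlies that result. The only point you omit is the paper's observation about boundedness of $\cU$ supplanting one of the hypotheses of \cite{Ana75}, which does not affect the correctness of your outline.
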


A slightly more permissive, but significantly less elegant, sufficient condition for the existence of a solution to \eqref{seteq} has been identified in \cite{Bou10}; we direct the reader to the details of that paper for more information. We forgo the proof of Theorem \ref{thbou}, as it follows immediately from the main result in \cite{Ana75}. We note that the statement of Theorem \ref{thbou} is somewhat simpler than the original theorem statement of \cite{Ana75}, as one of the original conditions is automatically satisfied because $\cU$ is assumed to be bounded.

\begin{remark} 
It can be shown that the goodness function from the example of the Van der Pol oscillator in Section \ref{damosc} does not satisfy the sufficient condition of Theorem \ref{thbou}. Nevertheless, as can be seen from Section \ref{damosc}, this does not prevent Algorithm \ref{alg} from constructing a control law arbitrarily close to the myopically-optimal one.
\end{remark}

\section{Proofs of Performance Bounds}
\label{proofs}

\begin{proof}[Proof of Lemma \ref{lem1}]\
	\begin{enumerate}[(i)]
		\item{We note that, for any control law $u$,
			\begin{equation*}
			\begin{split}& \left\|\phi_u(t_1,x^0)-\phi_u(t_2,x^0)\right\| =\left\|\int_{t_1}^{t_2} f\big(\phi_u(t,x^0)\big)+\sum_{i=1}^m g_i\big(\phi_u(t,x^0)\big)u_i dt\right\| \\
& \leq\int_{t_1}^{t_2}\left\|f\big(\phi_u(t,x^0)\big)+\sum_{i=1}^m g_i\big(\phi_u(t,x^0)\big)u_i\right\|dt \leq\int_{t_1}^{t_2} M_0(m+1)dt=M_0(m+1)|t_2-t_1|\textrm{,}
			\end{split}
			\end{equation*}
			where the last inequality follows from the triangle inequality and because we took $\cU=[-1,1]^m$.
			
			The claim for $x^j$'s follows immediately from the definition: $x^j=\phi_u(j\varepsilon,x^0)$, where $u$ is the control law from Algorithm \ref{alg}.
		}
		\item{Let $j\in\{0,\ldots,m\}$. Then,
			\begin{equation*}
			\begin{split}
			& \left\|\frac{x^{j+1}-x^j}{\varepsilon}- v_{x^{j+1}}(u^j)\right\|\leq \frac{1}{\varepsilon}\Bigg\|\int_0^{\varepsilon}\Bigg(f\big(\phi_{u^j}(t,x^j)\big) +\sum_{i=1}^m g_i\big(\phi_{u^j}(t,x^j)\big)u^j_i-v_{x^{j+1}}(u^j)\Bigg) dt\Bigg\| \\
            & \leq  \frac{1}{\varepsilon}\int_0^{\varepsilon}\Bigg(\left\|f\big(\phi_{u^j}(t,x^j)\big)-f(x^{j+1})\right\| +\sum_{i=1}^m \left\|g_i\big(\phi_{u^j}(t,x^j)\big)u^j_i-\sum_{i=1}^m g_i(x^{j+1})u^j_i\right\|\Bigg)dt \\
& \leq \frac{1}{\varepsilon}\int_0^{\varepsilon}(m+1)M_1\|\phi_{u^j}(t,x^j)-x^{j+1}\|dt \leq\frac{1}{\varepsilon}\int_0^{\varepsilon}M_0M_1(m+1)^2t dt= \frac{M_0M_1(m+1)^2\varepsilon}{2}\textrm{,}
			\end{split}
			\end{equation*}
			where we use the definition of $v_{x^{j+1}}(u^j)$ and the fact that Lipschitz constants of $f,g_i$ are no larger than $M_1$. The last inequality follows from (i).}
		\item{Let $j\in\{0,\ldots,m\}$. We note that 
			$\|v_{x^{j+1}}(u^j)-v_x(u^j)\| \leq \|f(x^{j+1})-f(x)\|+\sum_{i=1}^m\|(g_i(x^{j+1})-g_i(x))u^j_i\| \leq (m+1)M_1\|x^{j+1}-x\|\leq M_0M_1(m+1)^3\varepsilon\textrm{,}$ where the last two inequalities follow from the bounds on Lipschitz constants of $f,g_i$, and from (i), respectively.}
	\end{enumerate}
\end{proof}

\begin{proof}[Proof of Theorem \ref{learapr}]    
From $u=\sum\lambda_j u^j=\sum\lambda_j(u^*+\Delta u^j)$ and $\sum\lambda_j=1$ we obtain $u=u^*+\lambda_1\Delta u^1+\ldots+\lambda_m\Delta u^m$. By definition of $\Delta u^j$ from line 5 of Algorithm \ref{alg}, we have $(u-u^*)_j=\pm\lambda_j\delta$ for all $j\geq 1$.
Hence, $|\pm\lambda_j\delta|<\|u-u^*\|$ for all $j\geq 1$. Since $u,u^*\in\cU=[-1,1]^m$, we obtain $|\lambda_j|\leq 2\sqrt{m}/\delta$ for all $j\geq 1$. By using $\sum\lambda_j=1$ and triangle inequality, we get $|\lambda_0|\leq 1+2m\sqrt{m}/\delta$.

Now, we noted before that $v_x(u)=\sum \lambda_j v_x(u^j)$. Hence, we have $\|v_x(u)-\sum \lambda_j (x^{j+1}-x^j)/\varepsilon\|\leq\sum |\lambda_j|\|v_x(u^j)-(x^{j+1}-x^j)/\varepsilon\|$. Combining parts (ii) and (iii) of Lemma \ref{lem1}, the right hand side of the previous inequality can be bounded by $\sum|\lambda_j| M_0M_1((m+1)^2/2+(m+1)^3)\varepsilon\leq\sum|\lambda_j|2M_0M_1(m+1)^3\varepsilon\leq (1+4m\sqrt{m}/\delta)2M_0M_1(m+1)^3\varepsilon$, where the last inequality was obtained by using bounds on $\lambda_j$ from the previous paragraph.
\end{proof}

\begin{proof}[Proof of Theorem \ref{maith}]
	Let $\overline{u}$ be the optimal control under dynamics \eqref{thesys} at $y$, i.e., $$G(\phi_2|_{[0,T_2]},v_y(\overline{u}))=\max_{u\in\cU} G(\phi_2|_{[0,T_2]},v_y(u))\textrm{.}$$ Then,
	\begin{equation}
	\label{eqthma}
	\begin{split}
	&\left|G\left(\phi_2|_{[0,T_2]},v_y(\overline{u})\right)-G\left(\phi_2|_{[0,T_2]},v_y(u^*)\right)\right| \\
	& \leq\left|G\left(\phi_2|_{[0,T_2]},v_y(\overline{u})\right)- G\left(\phi_1|_{[0,T_1]},\tilde{f}+\sum_{i=1}^m \tilde{g}_iu^*_i\right)\right|
+\Bigg|G\left(\phi_1|_{[0,T_1]},\tilde{f}+\sum_{i=1}^m \tilde{g}_iu^*_i\right)-G\left(\phi_2|_{[0,T_2]},v_y(u^*)\right)\Bigg|\textrm{.}
	\end{split}
	\end{equation}
	We first claim that
	\begin{equation}
	\label{eqthm1}
    \Bigg|G\left(\phi_2|_{[0,T_2]},v_y(\overline{u})\right) -G\left(\phi_1|_{[0,T_1]},\tilde{f}+\sum_{i=1}^m \tilde{g}_iu^*_i\right)\Bigg| \leq Ld\left(\phi_1|_{[0,T_1]},\phi_2|_{[0,T_2]}\right)+L(m+1)M_1\|x-y\|+L\nu\textrm{.}
	\end{equation}
	We note that $\|v_y(\overline{u})-\tilde{f}-\sum_{i=1}^m \tilde{g}_i\overline{u}_i\|\leq\|v_y(\overline{u})-v_x(\overline{u})\|+\|v_x(\overline{u})-\tilde{f}-\sum_{i=1}^m \tilde{g}_i\overline{u}_i\|\leq (m+1)M_1\|y-x\|+\nu$. Hence, 
	\begin{equation}
	\label{eqthm2}
	\begin{split}
	& \left|G\left(\phi_2|_{[0,T_2]},v_y(\overline{u})\right) -G\left(\phi_1|_{[0,T_1]},\tilde{f}+\sum_{i=1}^m \tilde{g}_i\overline{u}_i\right)\right| \\
    & \leq L\Bigg(d\left(\phi_1|_{[0,T_1]},\phi_2|_{[0,T_2]}\right) +\left\|v_y(\overline{u})-\tilde{f}-\sum_{i=1}^m\tilde{g}_i\overline{u}_i\right\|\Bigg) \\
    & \leq Ld\left(\phi_1|_{[0,T_1]},\phi_2|_{[0,T_2]}\right) +LM_1(m+1)\|x-y\|+L\nu\textrm{.}
	\end{split}
	\end{equation}
Analogously to \eqref{eqthm2},
	\begin{equation}
	\label{eqthm3}
\left|G\left(\phi_1|_{[0,T_1]},\tilde{f}+\sum_{i=1}^m \tilde{g}_iu^*_i\right) -G\left(\phi_2|_{[0,T_2]},v_y(u^*)\right)\right|
\leq Ld\left(\phi_1|_{[0,T_1]},\phi_2|_{[0,T_2]}\right) +L(m+1)M_1\|x-y\|+L\nu\textrm{.}
	\end{equation}
		Now, if \eqref{eqthm1} was incorrect, then, by combining its negation with either \eqref{eqthm2} or \eqref{eqthm3}, we obtain that $G(\phi_1|_{[0,T_1]},\tilde{f}+\sum_{i=1}^m \tilde{g}_i\overline{u}_i)>G(\phi_1|_{[0,T_1]},\tilde{f}+\sum_{i=1}^m \tilde{g}_iu^*_i)$ or $G(\phi_2|_{[0,T_2]},v_y(u^*))>G(\phi_2|_{[0,T_2]},v_y(\overline{u}))$, which are both contradictions with the definitions of $u^*$ and $\overline{u}$, respectively. Thus, \eqref{eqthm1} holds, and from \eqref{eqthma}, \eqref{eqthm1}, and \eqref{eqthm3}, we obtain the theorem claim.
\end{proof}
	
\begin{proof}[Proof of Corollary \ref{cor2}]
	Using the same notation as in the proof of Theorem \ref{maith}, we obtain $|G(\phi_2|_{[0,T_2]},v_y(\overline{u})) -G(\phi_2|_{[0,T_2]},v_y(u^*+\tilde{u}))|\leq |G(\phi_2|_{[0,T_2]},v_y(\overline{u})) -G(\phi_2|_{[0,T_2]},v_y(u^*))| +|G(\phi_2|_{[0,T_2]},v_y(u^*)) -G(\phi_2|_{[0,T_2]},v_y(u^*+\tilde{u}))|$.	From Theorem \ref{maith}, we know that the first summand on the right hand side of the above inequality is bounded by $2Ld\left(\phi_1|_{[0,T_1]},\phi_2|_{[0,T_2]}\right)+2LM_1(m+1)\|x-y\|+2L\nu\textrm{.}$ By the definition of $v_y(\cdot)$ and the Lipschitz constant on $G$, the second summand is bounded by $L\|\sum_{i=1}^m g_i(y)\tilde{u}_i\|\leq LM_0m\delta<LM_0(m+1)\delta$. We thus obtain the desired bound.
\end{proof}

\begin{proof}[Proof of Theorem \ref{bigthm}]
	Let us take any $t\geq (m+1)\varepsilon$. Time $t$ is certainly contained in one iteration of the \textbf{repeat} loop in lines 3--14 of Algorithm \ref{alg}, and since $t\geq (m+1)\varepsilon$, this \textbf{repeat} loop is not the first. Let $x^0,\ldots,x^{m+1}$ be the $x^j$'s used in that iteration of the \textbf{repeat} loop. From Theorem \ref{learapr}, we note that the previous iteration resulted in a control input $u^*\in\cU$ which is near-optimal at $x^0$, that is, $u^*$ is optimal for dynamics $u\to\tilde{f}+\sum\tilde{g}_iu_i$, with $\|v_{x^0}(u)-(\tilde{f}+\sum\tilde{g}_iu_i)\|\leq 2M_0M_1(m+1)^3(1+4m^{3/2}/\delta)\varepsilon$ for all $u\in\cU$.
	
	Now, we note that, at time $t$, $u^+(t)=u^*+\Delta u^j$, for some $j\in\{0,\ldots,m\}$ and $\|\Delta u^j\|\leq\delta$. Thus, we can apply Corollary \ref{cor2}, with $\nu=2M_0M_1(m+1)^3(1+4m^{3/2}/\delta)\varepsilon$. We hence obtain 
	\begin{equation}
	\label{eqbig1}
	\begin{split}
	& \Big|\max_u G\left(\phi_{u^+}(\cdot,x_0)|_{[0,t]},v_x(u)\right) - G\left(\phi_{u^+}(\cdot,x_0)|_{[0,t]},v_x(u^+)\right)\Big| \\
	& \leq 2Ld\left(\phi_{u^+}(\cdot,x_0)|_{[0,t]},\phi_{u^+}(\cdot,x_0)|_{[0,t_0]}\right)+2LM_1(m+1)\|x-x^0\| \\ 
    & \textrm{\hskip 10pt} +4LM_0M_1(m+1)^3(1+4m^{3/2}/\delta)\varepsilon + LM_0(m+1)\delta\textrm{,}
	\end{split}
	\end{equation}
	where $t_0$ is the time of the beginning of the current iteration of the repeat loop, i.e., the time when the system was at $x^0$.
	
	We note that by \eqref{defd}, $d\left(\phi_{u^+}(\cdot,x_0)|_{[0,t]},\phi_{u^+}(\cdot,x_0)|_{[0,t_0]}\right)=t-t_0\in[0,(m+1)\varepsilon)\textrm{.}$ Additionally, we note that $\|x-x^0\|=\|\phi_{u^+}(t',x^0)-\phi_{u^+}(0,x^0)\|$ for  $t'=t-t_0\in[0,(m+1)\varepsilon)$. Thus, by Part (i) of Lemma \ref{lem1}, we obtain $\|x-x^0\|\leq M_0(m+1)^2\varepsilon$. Hence, from \eqref{eqbig1}, and taking some very liberal bounds, it now follows that $|\max_u G(x,v_x(u)) -G(x,v_x(u^+))| \leq 6L(M_0+1)(M_1+1)(m+1)^3(1+4m^{3/2}/\delta)\varepsilon+LM_0(m+1)\delta$. The claim of the theorem follows by noting that $d\phi_u(0+,x)/dt=f(x)+\sum g_i(x)u_i=v_x(u)$.
\end{proof}

\section*{Acknowledgements}
This research was partly funded by grants FA8650-15-C-2546 and FA8650-16-C-2610 from the Air Force Research Laboratory (AFRL), grant W911NF-16-1-0001 from the Defense Advanced Research Projects Agency (DARPA), and grant W911NF-15-1-0592 from the Army Research Office (ARO). The authors thank Mohamadreza Ahmadi for his valuable comments in improving this manuscript, as well as for his suggestion of a Van der Pol oscillator as an example in Section~\ref{simul}, and Srilakshmi Pattabiraman for noticing a slight mistake in the previous version of this manuscript.

\bibliographystyle{IEEEtran}
\bibliography{refs}

\begin{thebibliography}{10}
\providecommand{\url}[1]{#1}
\csname url@samestyle\endcsname
\providecommand{\newblock}{\relax}
\providecommand{\bibinfo}[2]{#2}
\providecommand{\BIBentrySTDinterwordspacing}{\spaceskip=0pt\relax}
\providecommand{\BIBentryALTinterwordstretchfactor}{4}
\providecommand{\BIBentryALTinterwordspacing}{\spaceskip=\fontdimen2\font plus
\BIBentryALTinterwordstretchfactor\fontdimen3\font minus
  \fontdimen4\font\relax}
\providecommand{\BIBforeignlanguage}[2]{{%
\expandafter\ifx\csname l@#1\endcsname\relax
\typeout{** WARNING: IEEEtran.bst: No hyphenation pattern has been}%
\typeout{** loaded for the language `#1'. Using the pattern for}%
\typeout{** the default language instead.}%
\else
\language=\csname l@#1\endcsname
\fi
#2}}
\providecommand{\BIBdecl}{\relax}
\BIBdecl

\bibitem{Ornetal17C}
M.~Ornik, A.~Israel, and U.~Topcu, ``Myopic control of systems with unknown
  dynamics,'' 2017, submitted to the 2018 American Control Conference.

\bibitem{Alo06}
S.~Aloni, \emph{Israeli F-15 Eagle Units in Combat}.\hskip 1em plus 0.5em minus
  0.4em\relax Osprey Publishing, 2006.

\bibitem{Bruetal16}
S.~L. Brunton, J.~L. Proctor, and J.~N. Kutz, ``Discovering governing equations
  from data by sparse identification of nonlinear dynamical systems,''
  \emph{Proc. Natl. Acad. Sci. U.S.A.}, vol. 113, no.~15, pp. 3932--3937, 2016.

\bibitem{Cubetal12}
T.~S. Cubitt, J.~Eisert, and M.~M. Wolf, ``Extracting dynamical equations from
  experimental data is {NP} hard,'' \emph{Phys. Rev. Lett.}, vol. 108, 2012.

\bibitem{Hiletal15}
D.~J.~A. Hills, A.~M. Gr\"{u}tter, and J.~J. Hudson, ``An algorithm for
  discovering {Lagrangians} automatically from data,'' \emph{PeerJ Comput.
  Sci.}, vol.~1, 2015.

\bibitem{SchLip09}
M.~Schmidt and H.~Lipson, ``Distilling free-form natural laws from experimental
  data,'' \emph{Science}, vol. 324, no. 5923, pp. 81--85, 2009.

\bibitem{SolOst08}
D.~P. Solomatine and A.~Ostfeld, ``Data-driven modelling: some past experiences
  and new approaches,'' \emph{J. Hydroinf.}, vol.~10, no.~1, pp. 3--22, 2008.

\bibitem{Ahmetal17}
M.~Ahmadi, A.~Israel, and U.~Topcu, ``Safety assessment based on
  physically-viable data-driven models,'' in \emph{56st IEEE Conference on
  Decision and Control}, 2017, accepted.

\bibitem{CopRat94}
R.~P. Copeland and K.~S. Rattan, ``A fuzzy logic supervisor for {PID} control
  of unknown systems,'' in \emph{IEEE International Symposium on Intelligent
  Control}, 1994, pp. 22--26.

\bibitem{Khaetal16}
S.~Khadraoui, H.~N. Nounou, M.~N. Nounou, A.~Datta, and S.~P. Bhattacharyya,
  ``Adaptive controller design for unknown systems using measured data,''
  \emph{Asian J. Control}, vol.~18, no.~4, pp. 1453--1466, 2016.

\bibitem{NaHer14}
J.~Na and G.~Herrmann, ``Online adaptive approximate optimal tracking control
  with simplified dual approximation structure for continuous-time unknown
  nonlinear systems,'' \emph{IEEE/CAA J. Autom. Sin.}, vol.~1, no.~4, pp.
  412--422, 2014.

\bibitem{Yehetal90}
H.-H. Yeh, S.~S. Banda, and P.~J. Lynch, ``Control of unknown systems via
  deconvolution,'' \emph{Dyn. Control}, vol.~13, no.~3, pp. 416--423, 1990.

\bibitem{Zhoetal12}
Z.~Zhou, R.~Takei, H.~Huang, and C.~J. Tomlin, ``A general, open-loop
  formulation for reach-avoid games,'' in \emph{51st IEEE Conference on
  Decision and Control}, 2012, pp. 6501--6506.

\bibitem{Joy12}
D.~Joyce, ``On manifolds with corners,'' in \emph{Advances in Geometric
  Analysis}, S.~Janeczko, J.~Li, and D.~H. Phong, Eds.\hskip 1em plus 0.5em
  minus 0.4em\relax International Press, 2012, pp. 225--258.

\bibitem{Bos92}
J.~T. Bosworth, ``Linearized aerodynamic and control law models of the {X-29A}
  airplane and comparison with flight data,'' National Aeronautics and Space
  Administration, Tech. Rep. NASA TM-4356, 1992.

\bibitem{Duketal88}
E.~L. Duke, R.~F. Antoniewicz, and K.~D. Krambeer, ``Derivation and definition
  of a linear aircraft model,'' National Aeronautics and Space Administration,
  Tech. Rep. NASA RP-1207, 1988.

\bibitem{Steetal16}
B.~L. Stevens, F.~L. Lewis, and E.~N. Johnson, \emph{Aircraft Control and
  Simulation: Dynamics, Controls Design, and Autonomous Systems}.\hskip 1em
  plus 0.5em minus 0.4em\relax Wiley, 2016.

\bibitem{LaV06}
S.~M. LaValle, \emph{Planning Algorithms}.\hskip 1em plus 0.5em minus
  0.4em\relax Cambridge University Press, 2006.

\bibitem{Navetal99}
Y.~Naveh, P.~Z. Bar-Yoseph, and Y.~Halevi, ``Nonlinear modeling and control of
  a unicycle,'' \emph{Dyn. Control}, vol.~9, no.~4, pp. 279--296, 1999.

\bibitem{Moketal06}
A.~Mokhtari, A.~Benallegue, and Y.~Orlov, ``Exact linearization and sliding
  mode observer for a quadrotor unmanned aerial vehicle,'' \emph{Int. J. Rob.
  Autom.}, vol.~21, no.~1, pp. 39--49, 2006.

\bibitem{GarJor77}
W.~L. Garrard and J.~M. Jordan, ``Design of nonlinear automatic flight control
  systems,'' \emph{Automatica}, vol.~13, no.~5, pp. 497--505, 1977.

\bibitem{BulLew04}
F.~Bullo and A.~D. Lewis, \emph{Geometric Control of Mechanical Systems}.\hskip
  1em plus 0.5em minus 0.4em\relax Springer, 2004.

\bibitem{Isi95}
A.~Isidori, \emph{Nonlinear Control Systems}.\hskip 1em plus 0.5em minus
  0.4em\relax Springer, 1995.

\bibitem{Son98}
E.~D. Sontag, \emph{Mathematical Control Theory: Deterministic Finite
  Dimensional Systems}.\hskip 1em plus 0.5em minus 0.4em\relax Springer, 1998.

\bibitem{FadBro06}
F.~Fadaie and M.~E. Broucke, ``A viability problem for control affine systems
  with application to collision avoidance,'' in \emph{45st IEEE Conference on
  Decision and Control}, 2006, pp. 5998--6003.

\bibitem{DorBis11}
R.~C. Dorf and R.~H. Bishop, \emph{Modern Control Systems}.\hskip 1em plus
  0.5em minus 0.4em\relax Prentice Hall, 2011.

\bibitem{KwaSiv72}
H.~Kwakernaak and R.~Sivan, \emph{Linear Optimal Control Systems}.\hskip 1em
  plus 0.5em minus 0.4em\relax Wiley, 1972.

\bibitem{Esfetal16}
P.~M. Esfahani, D.~Chatterjee, and J.~Lygeros, ``The stochastic reach-avoid
  problem and set characterization for diffusions,'' \emph{Automatica},
  vol.~70, pp. 43--56, 2016.

\bibitem{Lev11}
W.~S. Levine, \emph{The Control Systems Handbook, Second Edition: Control
  System Advanced Methods}.\hskip 1em plus 0.5em minus 0.4em\relax CRC Press,
  2011.

\bibitem{Pow11}
W.~B. Powell, \emph{Approximate Dynamic Programming: Solving the Curses of
  Dimensionality}.\hskip 1em plus 0.5em minus 0.4em\relax Wiley, 2011.

\bibitem{RimBro94}
R.~D. Rimey and C.~M. Brown, ``Control of selective perception using {Bayes}
  nets and decision theory,'' \emph{Int. J. Comput. Vision}, vol.~12, no.~2,
  pp. 173--207, 1994.

\bibitem{ZhaYu13}
S.~Zhang and A.~J. Yu, ``Forgetful {Bayes} and myopic planning: Human learning
  and decision-making in a bandit setting,'' in \emph{Neural Information
  Processing Systems Conference}, 2013, pp. 2607--2615.

\bibitem{AlvMar11}
I.~Alvarez and S.~Martin, ``Geometric robustness of viability kernels and
  resilience basins,'' in \emph{Viability and Resilience of Complex Systems},
  G.~Deffuant and N.~Gilbert, Eds.\hskip 1em plus 0.5em minus 0.4em\relax
  Springer, 2011, pp. 193--218.

\bibitem{ManPnu92}
Z.~Manna and A.~Pnueli, \emph{The Temporal Logic of Reactive and Concurrent
  Systems}.\hskip 1em plus 0.5em minus 0.4em\relax Springer, 1992.

\bibitem{Papetal16}
I.~Papusha, J.~Fu, U.~Topcu, and R.~M. Murray, ``Automata theory meets
  approximate dynamic programming: Optimal control with temporal logic
  constraints,'' in \emph{55th IEEE Conference on Decision and Control}, 2016,
  pp. 434--440.

\bibitem{AnsMur16}
A.~R. Ansari and T.~D. Murphey, ``Sequential action control: Closed-form
  optimal control for nonlinear and nonsmooth systems,'' \emph{IEEE Trans.
  Rob.}, vol.~32, no.~5, pp. 1196--1214, 2016.

\bibitem{Ecketal14}
H.-A. Eckel, S.~Scharring, S.~Karg, C.~Illg, and J.~Peter, ``Overview of laser
  ablation micropropulsion research activities at {DLR Stuttgart},'' in
  \emph{International Symposium on High Power Laser Ablation and Beamed Energy
  Propulsion}, 2014.

\bibitem{Krejetal17}
D.~Krejci, F.~Mier-Hicks, R.~Thomas, T.~Haag, and P.~Lozano, ``Emission
  characteristics of passively fed electrospray microthrusters with propellant
  reservoirs,'' \emph{J. Spacecr. Rockets}, vol.~54, no.~2, pp. 447--458, 2017.

\bibitem{Bry94}
A.~E. Bryson, \emph{Control of Spacecraft and Aircraft}.\hskip 1em plus 0.5em
  minus 0.4em\relax Princeton University Press, 1994.

\bibitem{CheChe01}
J.~Che and D.~Chen, ``Automatic landing control using {H}$_\infty$ control and
  stable inversion,'' in \emph{40st IEEE Conference on Decision and Control},
  2001, pp. 241--246.

\bibitem{Ganetal02}
S.~Ganguli, A.~Marcos, and G.~Balas, ``Reconfigurable {LPV} control design for
  {Boeing 747-100/200} longitudinal axis,'' in \emph{American Control
  Conference}, 2002, pp. 3612--3617.

\bibitem{Smi75}
H.~J. Smith, ``A flight test investigation of the rolling moments induced on a
  {T-37B} airplane in the wake of a {B-747} airplane,'' National Aeronautics
  and Space Administration, Tech. Rep. TM X-56031, 1975.

\bibitem{MulSte93}
S.~S. Mulgund and R.~F. Stengel, ``Target pitch angle for the microburst escape
  maneuver,'' \emph{J. Aircr.}, vol.~30, no.~6, pp. 826--832, 1993.

\bibitem{Vid02}
M.~Vidyasagar, \emph{Nonlinear Systems Analysis}.\hskip 1em plus 0.5em minus
  0.4em\relax SIAM, 2002.

\bibitem{Dutetal03}
M.~S. Dutra, A.~C. {de Pina Filho}, and V.~F. Romano, ``Modeling of a bipedal
  locomotor using coupled nonlinear oscillators of {Van der Pol},'' \emph{Biol.
  Cybern.}, vol.~88, no.~4, pp. 286--292, 2003.

\bibitem{HelEsp08}
R.~H\'{e}liot and B.~Espiau, ``Online generation of cyclic leg trajectories
  synchronized with sensor measurement,'' \emph{Rob. Auton. Syst.}, vol.~56,
  no.~5, pp. 410--421, 2008.

\bibitem{VesDem05}
P.~Veskos and Y.~Demiris, ``Developmental acquisition of entrainment skills in
  robot swinging using van der {Pol} oscillators,'' in \emph{Fifth
  International Workshop on Epigenetic Robotics: Modeling Cognitive Development
  in Robotic Systems}, 2005, pp. 87--93.

\bibitem{WirRan02}
S.~Wirkus and R.~Rand, ``The dynamics of two coupled {van der Pol} oscillators
  with delay coupling,'' \emph{Nonlinear Dyn.}, vol.~30, no.~3, pp. 205--221,
  2002.

\bibitem{MouPer05}
E.~Moulay and W.~Perruquetti, ``Stabilization of nonaffine systems: A
  constructive method for polynomial systems,'' \emph{IEEE Trans. Autom.
  Control}, vol.~50, no.~4, pp. 520--526, 2005.

\bibitem{FleRis75}
W.~H. Fleming and R.~W. Rishel, \emph{Deterministic and Stochastic Optimal
  Control}.\hskip 1em plus 0.5em minus 0.4em\relax Springer, 1975.

\bibitem{Mar90}
M.~Mariton, \emph{Jump Linear Systems in Automatic Control}.\hskip 1em plus
  0.5em minus 0.4em\relax CRC Press, 1990.

\bibitem{Bou10}
L.~Boudjenah, ``Existence of solutions to differential inclusions with delayed
  arguments,'' \emph{Electr. J. Differ. Equ.}, vol. 2010, no. 175, pp. 1--8,
  2010.

\bibitem{Car97}
D.~A. Carlson, ``Asymptotic stability for a class of delay differential
  inclusions,'' \emph{J. Differ. Equ.}, vol. 139, no.~2, pp. 219--236, 1997.

\bibitem{HenOua09}
J.~Henderson and A.~Ouahab, ``Fractional functional differential inclusions
  with finite delay,'' \emph{Nonlinear Anal. Theory Methods Appl.}, vol.~70,
  no.~5, pp. 2091--2105, 2009.

\bibitem{Liuetal16}
K.-Z. Liu, X.-M. Sun, J.~Liu, and A.~R. Teel, ``Stability theorems for delay
  differential inclusions,'' \emph{IEEE Trans. Autom. Control}, vol.~61,
  no.~10, pp. 3215--3220, 2016.

\bibitem{Ana75}
B.~I. Anan'ev, ``Teorema sushchestovovaniya dlya differentsial'nogo
  vklyucheniya s peremenn'im zapazd'ivaniem {[An} existence theorem for a
  differential inclusion with variable lag],'' \emph{Differentsial'nye
  uravneniya}, vol.~11, no.~7, pp. 1155--1158, 1975.

\end{thebibliography}

\end{document}